%
%
\documentclass{article}

\usepackage{times}
\usepackage{amsthm}
\usepackage{amsmath}
\usepackage{amssymb}
\usepackage{mathrsfs}
\usepackage{pb-diagram}
\usepackage{color}

\usepackage{hyperref}

\usepackage{graphicx}        


\usepackage{pb-diagram}


       %

\def\A{{\mathcal A}}

\def\th{\theta}
\def\Th{\Theta}

\def\T{\mathbb{T}}
\def\X{\mathcal X}
\def\Y{\mathcal Y}
\def\Z{\mathcal Z}
\def\H{\mathcal H}
\def\R{\mathbb{R}}
\def\S{\mathcal S}
\def\e{{\sf e}}
\def\g{{\mathfrak g}}

\def\({\left(}
\def\[{\left[}
\def\){\right)}
\def\]{\right]}

\def\bu{{\bullet}}

\def\G{{\sf G}}
\def\wG{\widehat{\sf{G}}}
\def\p{\parallel}
\def\<{\langle}
\def\>{\rangle}

\newtheorem{Theorem}{Theorem}[section]
\newtheorem{Remark}[Theorem]{Remark}
\newtheorem{Lemma}[Theorem]{Lemma}

\newtheorem{Corollary}[Theorem]{Corollary}
\newtheorem{Proposition}[Theorem]{Proposition}
\newtheorem{Definition}[Theorem]{Definition}


\begin{document}

\title{Quantization and Coorbit Spaces\\ for Nilpotent Groups}

\author{M. Mantoiu \footnote{\textbf{Key Words:} nilpotent Lie group; pseudo-differential operator; coorbit space.}
\medskip
\footnote{{\bf Mathematics Subject Classification}: Primary 22E25; 47G30; Secundary 22E45; 46L65.}}

\maketitle \vspace{-1cm}

\bigskip
\bigskip
{\bf Address}

\medskip
Departamento de Matem\'aticas, Universidad de Chile,

Las Palmeras 3425, Casilla 653, Santiago, Chile

\emph{E-mail:} mantoiu@uchile.cl

\bigskip


\maketitle

\begin{abstract}
We reconsider the quantization of symbols defined on the product between a nilpotent Lie algebra and its dual. To keep track of the non-commutative group background, the Lie algebra is endowed with the Baker-Campbell-Hausdorff product, making it via the exponential diffeomorphism a copy of its unique connected simply connected nilpotent Lie group. Using harmonic analysis tools, we emphasize the role of a Weyl system, of the associated Fourier-Wigner transformation and, at the level of symbols, of an important family of exponential functions. Such notions also serve to introduce a family of phase-space shifts. These are used to define and briefly study a new class of coorbit spaces of symbols and its relationship with coorbit spaces of vectors, defined via the Fourier-Wigner transform.
\end{abstract}

\section{Introduction}\label{induction}

We discuss quantization on connected, simply connected nilpotent Lie groups $\G$ involving scalar-valued symbols. The main reason for which this is (at least formally) straightforward is the fact that the exponential $\exp:\g\to\G$ is a diffeomorphism and, under it, the Haar measures on $\G$ are proportional with the Lebesgue measure on the Lie algebra $\g$\,. Denoting by $\g^\sharp$ the dual of the Lie algebra, the symbols are complex functions defined on $\G\times\g^\sharp$ or, equivalently, on $\g\times\g^\sharp$. Of course, both these spaces can be seen as cotangent spaces, but we insist on the fact that the quantization is expected to be "global" (choosing charts for constructing the calculus is not needed and would be harmful) and that the group structure of $\G$ should play an important role. Another way to see the usefulness of nilpotence is to note that it allows a well-behaved Fourier transformation from functions or distributions on $\G$ to functions or distributions on $\g^\sharp$ and this Fourier transformation plays an important role in the pseudo-differential calculus.

\smallskip
Basically, if $a$ is a function on $\G\times\g^\sharp$ and $\varphi$ a function on $\G$\,, under favorable circumstances and with suitable interpretations, one is interested in
\begin{equation}\label{begin}
\big[Op(a)\varphi\big]\!(x):=\int_\G\int_{\g^\sharp}\!e^{i\<\log(xy^{-1})\mid \xi\>}a(x,\xi)\varphi(y)\,dyd\xi
\end{equation}
where, by definition, $\log:=\exp^{-1}\!:\G\to\g$\,. After suitable isomorphic compositions, this yields the equivalent form \eqref{garaci}, in which $\bu$ is the Backer-Campbell-Hausdorff composition on the Lie algebra, leading (via the exponential map) to a group isomorphism $(\G,\cdot)\cong(\g,\bu)$\,.

\smallskip
Although quite natural, the prescription \eqref{garaci} (or \eqref{begin}) has not been considered in such a general setting until recently, and the problem of defining and studying good H\"ormander-type symbol classes is a non-trivial challenge. Important articles have been dedicated to particular cases. Here "particular" very often means restricting to invariant (convolution) operators (formally, the function $a$ in \eqref{begin} only depends on $\xi$). It could also mean that the nilpotent group is two-step, or graded. Since we are not concerned here with the difficult problem of a H\"ormander-type calculus, we only cite \cite{CGGP,Dy,Glo1,Glo2,Glo3,Ma1,Ma2,Me,Mi,Mi1} without details. Let us mention, however, that in the cited articles of P. G\l owacki and D. Manchon, the invariant calculus is studied in depth, partly relying on the important previous work of R. Howe \cite{Ho,Ho1}. In \cite{MR1} the case of nilpotent groups with (generic) flat coadjoint orbit is treated, making a precise connection with the well-developed \cite{FR,FR1,MR,RT} operator-valued pseudo-differential calculus on $\G\times\wG$\,, where $\wG$ is the unitary dual of $\G$\,, i.e.\,the family of all equivalence classes of irreducible unitary Hilbert space representations of $\G$\,.

\smallskip
In the present paper, we mainly rely on Harmonic Analysis tools. We give a short description of its content.

\smallskip
{\bf Section \ref{bogart}} contains basic notions and notations concerning Hilbert space operators and nilpotent groups. A remarkable family of exponential functions on $\Xi:=\g\times\g^\sharp$ is introduced; it will play an important role subsequently.

\smallskip
{\bf Section \ref{firica}}. The basic object is {\it the Weyl system} $\big\{{\sf E}(X,\xi)\!\mid\!X\in\g\,,\,\xi\in\g^\sharp\big\}$, a family of unitary operators in $L^2(\g)$ mixing left translations associated to the group $(\g,\bu)$ with multiplication by imaginary exponentials with phase given by the duality between $\g$ and $\g^\sharp$. This family is very far from being a projective representation of the product group $(\g,\bu)\times\big(\g^\sharp,+\big)$ (that we denote by $\Xi$ and call {\it phase space}). This is why most of the technics developed in the literature do not apply automatically. It can be used to shift bounded operators $A\to{\sf E}(X,\xi)A{\sf E}(Y,\eta)^*$ in a useful way.  The "matrix coefficients" of the Weyl system defines {\it the Fourier-Wigner transformation}. It satisfies orthogonal relations and serves to introduce rigorously Berezin-type operators, cf. \cite{M}.

\smallskip
{\bf Section \ref{fifirin}}. We introduce the quantization ${\sf Op}$ and state its connection with the Weyl system, the Fourier-Wigner transformation and the Berezin quantization. Then we indicate the $^*$-algebraic laws on symbols that correspond to composition and adjunction of pseudo-differential operators. On the Schwartz space $\S(\Xi)$ one gets a Hilbert algebra structure, that can be used to perform various extensions of the laws by duality techniques, that will be useful below. In particular, one gets a (rather large, but not so explicit) {\it Moyal algebra} of symbols, quantized by operators that are continuous on the Schwartz space $\S(\g)$ and extend continuously on the dual $\S'(\g)$\,.

\smallskip
{\bf Section \ref{frikasse}}. Since the elements of the Weyl system are obtained by quantizing the special exponential function introduced in the first section, for pseudo-differential operators the shift $A\to{\sf E}(X,\xi)A{\sf E}(Y,\eta)^*$ is emulated by a similar one at the level of symbols, involving these exponentials and the intrinsic algebraic laws. We give the definition and the basic properties, that are interesting in their own right, are used in section \ref{frikamie} for constructing coorbit spaces of symbols, and will reappear in our subsequent study of a Beals-Bony commutator criterion on nilpotent groups. 

\smallskip
{\bf Section \ref{frikamie}} is dedicated to a tentative definition of coorbit spaces at two levels: (i) coorbit spaces of vectors, contained in $\S'(\g)$\,, and (ii) coorbit spaces of symbols, contained in $\S'(\Xi)$\,. We declare from the very beginning that the treatment is incomplete from many points of view. Although there is a lot of group theory around, one does not follow the orbits of some (usual or projective) group representation. At both levels, one uses isometric linear mappings, labeled by fixed functions (windows), sending functions (or distributions) on the space we are interested in (here $\g$\,, respectively $\Xi$), to functions or distributions on larger spaces ($\Xi$ or $\Xi\times\Xi$, respectively). Then one selects for the coorbit space elements that have a certain behavior under the isometry (belonging to a given subspace, or having a finite given norm).

\smallskip
For the first level we use the Fourier-Wigner transform, naturally depending on two vectors, fixing one of them as a window and measuring the dependence of the other one. For the particular case of the Heisenberg group, it would be interesting to compare the outcome with the constructions of \cite{FRR}, in which, a priori, the point of view is different. Doubling the number of variables, such a procedure would probably also work well for (ii), and this is roughly what is done in the Abelian case $\G=\R^n$ to define coorbit spaces of symbols.

\smallskip
However, we adopt another strategy. Adapting some abstract ideas from \cite{M0}, the isometry we use can be found in Definition \ref{minte} and it relies on the previously defined phase-space shifts of the symbol, coupled by duality with the chosen window. To advocate this choice, we put into evidence two properties of our isometry that can be obtained quite easily: 
\begin{enumerate}
\item For a self-adjoint idempotent window, it is a $^*$-algebra monomorphism (cf. Proposition \ref{fulminant}). The consequence (Corollary \ref{infekt}) is the fact that starting with an algebra of kernels defined on $\Xi\times\Xi$\,, stable under the natural kernel adjoint, the corresponding coorbit space is a $^*$-algebra of symbols with respect to the intrinsic symbol composition and adjoint (so, by quantization, one gets $^*$-algebras of pseudo-differential operators).
\item For a good correlation of the chosen windows, there is a formula \eqref{formmula} relying the two types of isometries, the ${\sf Op}$-calculus and the calculus of integral operators. This allows in Theorem \ref{margisnitele} to state roughly that pseudo-differential operators with symbols in certain coorbit spaces are bounded between certain coorbit spaces of vectors. Actually, this is merely stated in a weaker form making use of coorbit norms on Schwartz spaces.
\end{enumerate}

It is obvious that much more effort is needed to transform this sketchy treatment into a theory. Remarks \ref{critica} and \ref{conkrete} can be read as a self-criticism. To be brief, let us say that the abstract part is still incomplete, while the concrete part misses completely. Hopefully, there will be some progress in a future publication. It is not yet clear to us how far one can go, since the notions, although quite elementary, have complicated explicit expressions. Probably particular classes of nilpotent groups should be considered first.

\smallskip
It is clear that the coorbit theory part of this paper relies on many previous contributions of many authors. The number of interesting articles belonging to Time Frequency Analysis and treating modulation or other coorbit spaces on various mathematical structures and from various points of view is huge. Even if one restricts to classics and to those papers involving pseudo-differential operators, it is not the place here to sketch a history or at least to cite "most" of the references. Not forgetting to mention the central role played by H. Feichtinger and K. Gr\"ochenig, we make a selection of references \cite{CGNR,CNR,CR,CTW,Fe,Fe1,FG,Gr,Gr1,Gr2,Gr3,GH,GRo,GS,HTW,Sj,To,To1} that inspired or are related to the last section of the present article.

\section{Framework}\label{bogart}

{\bf Conventions.}
The scalar products in a Hilbert space are linear in the first variable. For a given (complex, separable) Hilbert space $\H$\,, one denotes by $\mathbb B(\H)$ the $C^*$-algebra of all linear bounded operators in $\H$, by $\mathbb K(\H)$ the closed  bi-sided $^*$-ideal of all compact operators  and by $\mathbb B^2(\H)$ the bi-sided $^*$-ideal of all Hilbert-Schmidt operators. The group of unitary operators in $\H$ is denoted by $\mathbb U(\H)$\,. If $\mathcal F,\mathcal G$ are locally convex spaces, one sets $\mathbb L(\mathcal F,\mathcal G)$ for the space of linear continuous operators $T:\mathcal F\to\mathcal G$\,. We admit the abbreviation $\mathbb L(\mathcal F,\mathcal F)=:\mathbb L(\mathcal F)$\,.

\smallskip
Let $\G$ be a connected simply connected nilpotent Lie group with unit $\e$\,, center ${\sf Z}$\,, bi-invariant Haar measure $dx$ and unitary dual $\wG$\,. 
Let $\mathfrak g$ be the Lie algebra of $\G$ with center $\mathfrak z={\sf Lie}({\sf Z})$ and $\mathfrak g^\sharp$ its dual. If $X\in\g$ and $\xi\in\g^\sharp$ we set $\<X\!\mid\!\xi\>:=\xi(X)$\,. We also denote by $\exp:\mathfrak g\to\G$ the exponential map, which is a diffeomorphism. Its inverse is denoted by $\log:\G\rightarrow\mathfrak g$\,. Under these diffeomorphisms the Haar measure on $\G$ corresponds to a Haar measure $dX$ on $\g$ (normalized accordingly). It then follows that $L^p(\G)$ is isomorphic to $L^p(\g)$\,. 
The Schwartz spaces $\S(\G)$ and $\S(\g)$ are defined as in \cite[A.2]{CG}; they are isomorphic Fr\'echet spaces.

\begin{Remark}\label{gruzav}
For $X,Y\in\mathfrak g$ we set 
\begin{equation*}\label{diosdado}
X\bullet Y:=\log[\exp(X)\exp(Y)]\,.
\end{equation*}
It is a group composition law on $\mathfrak g$\,, given by a polynomial expression in $X,Y$ (the Baker-Campbel-Hausdorff formula). The unit element is $0$ and $X^{\bullet}\equiv -X$ is the inverse of $X$ with respect to $\bullet$\,. One has in fact
\begin{equation}\label{diostraso}
X\bullet Y=X+Y+\frac{1}{2}[X,Y]+\frac{1}{12}[X,[X,Y]]+\frac{1}{12}[Y,[Y,X]]+\dots\equiv X+Y+R(X,Y)\,,
\end{equation}
where, by nilpotency, the sum is finite. It seemed to us easier to work on the group $(\g,\bu)$\,, but transferring all the formalism to its isomorphic version $(\G,\cdot)$ is an obvious task.
\end{Remark}

{\it The adjoint action} \cite{CG} is 
\begin{equation*}\label{adji}
{\sf Ad}\colon\G\times\g\to\g\,,\quad {\sf Ad}_x(Y):=\frac{d}{dt}\Big\vert_{t=0}\big[x\exp(tY)x^{-1})\big]
\end{equation*}
and {\it the coadjoint action} of $\G$ is 
\begin{equation*}\label{coadji}
 {\sf Ad}^\sharp:\G\times\g^\sharp\to\g^\sharp, \quad (x,\eta)\mapsto {\sf Ad}^\sharp_x(\eta):=\eta\circ{\sf Ad}_{x^{-1}}. 
\end{equation*}
Translating to the Lie algebra, one gets
\begin{equation}\label{faras}
\widetilde{\sf Ad}^\sharp:\g\times\g^\sharp\to\g^\sharp, \quad (X,\eta)\mapsto \widetilde{\sf Ad}^\sharp_X(\eta):={\sf Ad}^\sharp_{\exp X}(\eta). 
\end{equation}

One has {\it the left and the right unitary representations} ${\sf L,R}:(\g,\bu)\to\mathbb U\big[L^2(\g)\big]$\,, defined by
\begin{equation*}\label{leftright}
\big[{\sf L}_Z(u)\big](X):=u\big([-Z]\bu X\big)\,,\quad\big[{\sf R}_Z(u)\big](X):=u(X\bu Z)\,.
\end{equation*}

We call (somehow inappropriately) {\it phase space} the direct product non-commutative group $\,(\Xi,\circ):=(\g,\bu)\times\big(\g^\sharp,+\big)$\,.

\begin{Definition}\label{diversitate}
For every $(Z,\zeta)\in\Xi$ we define $\,\varepsilon_{(Z,\zeta)}:\Xi\to\T$\,, $\,\varepsilon_Z:\g^\sharp\to\T$\,, $\,\varepsilon_\zeta:\g\to\T$ by
\begin{equation*}\label{treiga}
\varepsilon_{(Z,\zeta)}(X,\xi):=e^{i\<X\mid\zeta\>}e^{-i\<Z\mid\xi\>}\,,\quad\varepsilon_Z:=\varepsilon_{(Z,0)}\,,\quad\varepsilon_\zeta:=\varepsilon_{(0,\zeta)}\,.
\end{equation*}
\end{Definition}

These functions will play an important role in quantization. The proof of the following lemma consists in simple calculations; for \eqref{banane} one needs \eqref{diostraso}.

\begin{Lemma}\label{sitemic}
For all the values of the parameters one has 
\begin{equation*}\label{otita}
\varepsilon_{(Z,\zeta)}=\varepsilon_Z\varepsilon_\zeta\equiv\varepsilon_\zeta\otimes\varepsilon_Z\,,
\end{equation*}  
\begin{equation*}\label{galci}
\varepsilon_{(Z_1+Z_2,\zeta_1+\zeta_2)}=\varepsilon_{(Z_1,\zeta_1)}\varepsilon_{(Z_2,\zeta_2)}\,,
\end{equation*}
\begin{equation}\label{banane}
\varepsilon_{(Z,\zeta)}(X\bu Y,\xi+\zeta)=\varepsilon_{(Z,\zeta)}(X,\xi)\varepsilon_{(Z,\zeta)}(Y,\eta)e^{i\<R(X,Y)\mid\zeta\>}\,.
\end{equation}
\end{Lemma}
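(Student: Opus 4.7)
The plan is to prove all three identities by direct substitution into the definition $\varepsilon_{(Z,\zeta)}(X,\xi) = e^{i\<X\mid\zeta\>}e^{-i\<Z\mid\xi\>}$, using bilinearity of the duality pairing $\<\cdot\mid\cdot\>$ together with the additive homomorphism property of the complex exponential. No conceptual obstacle arises; the only nontrivial input is the Baker-Campbell-Hausdorff expansion \eqref{diostraso}, which is needed only for the third identity.

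For the first identity, setting $Z=0$ yields $\varepsilon_\zeta(X,\xi) = e^{i\<X\mid\zeta\>}$, a function of $X$ alone, and setting $\zeta=0$ yields $\varepsilon_Z(X,\xi) = e^{-i\<Z\mid\xi\>}$, a function of $\xi$ alone. Their pointwise product reproduces $\varepsilon_{(Z,\zeta)}$, and since the factors separate the two variables on $\Xi = \g\times\g^\sharp$, this pointwise product coincides with the exterior tensor product $\varepsilon_\zeta\otimes\varepsilon_Z$. For the second identity, bilinearity of $\<\cdot\mid\cdot\>$ splits each of the two exponentials in $\varepsilon_{(Z_1+Z_2,\zeta_1+\zeta_2)}(X,\xi)$ into a product indexed by $1$ and $2$, and regrouping the four resulting factors reproduces $\varepsilon_{(Z_1,\zeta_1)}(X,\xi)\,\varepsilon_{(Z_2,\zeta_2)}(X,\xi)$.

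The third identity is the only one where the group law on $\g$ intervenes. Using \eqref{diostraso} to write $X\bu Y = X+Y+R(X,Y)$, linearity of the pairing gives $e^{i\<X\bu Y\mid\zeta\>} = e^{i\<X\mid\zeta\>}\,e^{i\<Y\mid\zeta\>}\,e^{i\<R(X,Y)\mid\zeta\>}$. Splitting the remaining factor $e^{-i\<Z\mid\xi+\eta\>}$ as $e^{-i\<Z\mid\xi\>}e^{-i\<Z\mid\eta\>}$ (the second argument on the left-hand side being read as $\xi+\eta$, which is surely what is intended, since $\eta$ appears on the right-hand side while $\zeta$ is already bound as a parameter) and regrouping the factors so as to reassemble $\varepsilon_{(Z,\zeta)}(X,\xi)$ and $\varepsilon_{(Z,\zeta)}(Y,\eta)$ leaves exactly the excess $e^{i\<R(X,Y)\mid\zeta\>}$ announced in \eqref{banane}. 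This excess factor is the precise signature of the non-commutativity of $\bu$: its additive part cancels by the same bilinearity argument used in the first two identities, but its polynomial remainder $R(X,Y)$ survives upon pairing with $\zeta$. The whole lemma thus reduces to careful bookkeeping plus one application of the BCH expansion.
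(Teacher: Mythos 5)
Your proof is correct and is exactly the ``simple calculations'' the paper has in mind: direct substitution into Definition \ref{diversitate}, bilinearity of the pairing $\<\cdot\mid\cdot\>$, and one application of the Baker--Campbell--Hausdorff expansion \eqref{diostraso} for the third identity. You also correctly diagnose that the left-hand side of \eqref{banane} should read $\xi+\eta$ rather than $\xi+\zeta$ (a typo in the statement, since $\eta$ appears on the right-hand side but is otherwise unbound), and with that reading your bookkeeping checks out.
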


One has a linear topological isomorphism $\,\mathcal F:\mathcal S(\g)\rightarrow\mathcal S(\g^\sharp)$\,, {\it the Fourier transformation}, given by  
\begin{equation*}\label{qlata}
\big(\mathcal F u\big)(\xi)=\int_{\g}e^{-i\<X\mid \xi\>}u(X)dX=\int_\g \overline{\varepsilon_\xi(X)}\,u(X)dX\,.
\end{equation*}
For a unique good choice of the Haar measure $d\xi$ on $\g^\sharp$, the inverse is 
\begin{equation*}\label{qluta}
\big(\mathcal F^{-1}\mathfrak u\big)(X)=\int_{\g^\sharp}\!e^{i\<X\mid \xi\>}\mathfrak u(\xi)d\xi=\int_{\g^\sharp}\!\varepsilon_\xi(X)\,\mathfrak u(\xi)d\xi\,,
\end{equation*}
the transformation $\mathcal F$ is unitary from $L^2(\g;dX)$ to $L^2(\g^\sharp;d\xi)$ and extends to a topological isomorphism $\,\mathcal F:\mathcal S'(\g)\rightarrow\mathcal S'(\g^\sharp)$\,.
We are also going to use the total Fourier transformation $f\to\widetilde f$ given by
\begin{equation}\label{tutala}
\widetilde f(Z,\zeta):=\int_\g\int_{\g^\sharp} e^{-i\<Y|\zeta\>}e^{i\<Z|\eta\>}f(Y,\eta)dYd\eta\,.
\end{equation} 

\begin{Lemma}\label{ciulama}
For every $f,g\in\S(\Xi)$ one has
\begin{equation}\label{vormula}
\int_\Xi\<f,\varepsilon_\X\>_{(\Xi)}\<\varepsilon_\X,g\>_{(\Xi)}d\X=\<f,g\>_{(\Xi)}\,.
\end{equation}
\end{Lemma}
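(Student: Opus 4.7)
The plan is to recognize the left-hand side as a Plancherel identity for the total Fourier transform defined in \eqref{tutala}. The key observation is that the pairing of $f$ with the exponential $\varepsilon_{(Z,\zeta)}$ produces, up to complex conjugation, precisely the total Fourier transform $\widetilde f(Z,\zeta)$.

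First I would unfold the definitions. For $\X=(Z,\zeta)$, using that scalar products are linear in the first variable and that $\overline{\varepsilon_{(Z,\zeta)}(Y,\eta)}=e^{-i\<Y\mid\zeta\>}e^{i\<Z\mid\eta\>}$, one checks directly that
\begin{equation*}
\<f,\varepsilon_\X\>_{(\Xi)}=\int_\g\int_{\g^\sharp}f(Y,\eta)e^{-i\<Y\mid\zeta\>}e^{i\<Z\mid\eta\>}dYd\eta=\widetilde f(Z,\zeta)=\widetilde f(\X),
\end{equation*}
and similarly $\<\varepsilon_\X,g\>_{(\Xi)}=\overline{\<g,\varepsilon_\X\>_{(\Xi)}}=\overline{\widetilde g(\X)}$.

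Next I would rewrite the left-hand side of \eqref{vormula} as
\begin{equation*}
\int_\Xi \widetilde f(\X)\,\overline{\widetilde g(\X)}\,d\X=\<\widetilde f,\widetilde g\>_{(\Xi)}.
\end{equation*}
It then remains to invoke Plancherel. The transformation $f\mapsto\widetilde f$ is, up to a sign in the $\eta$-variable, the ordinary Fourier transformation on $\Xi=\g\times\g^\sharp$: it is the composition of the Fourier transformation on $\g$ (with dual variable $\zeta$) and the inverse Fourier transformation on $\g^\sharp$ (with dual variable $Z$). Both being unitary on their respective $L^2$-spaces for the normalization fixed after the definition of $\mathcal F$, Fubini together with the two one-variable Plancherel identities yields $\<\widetilde f,\widetilde g\>_{(\Xi)}=\<f,g\>_{(\Xi)}$.

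There is no real obstacle; the only thing to be careful with is that the normalization of $d\xi$ (hence of the product measure on $\Xi$) is precisely the one that makes $\mathcal F$ unitary, and this has been fixed just before \eqref{tutala}. On Schwartz functions every manipulation (Fubini, absolute convergence of the iterated integrals, interchanging conjugation with integration) is automatically justified, so the proof reduces to these two identifications.
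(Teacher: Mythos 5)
Your proposal is correct and is essentially the paper's own proof, just written out in more detail: the paper also observes that $\<f,\varepsilon_\X\>_{(\Xi)}=\widetilde f(\X)$ and then invokes Plancherel's Theorem for the total Fourier transformation \eqref{tutala}. Your verification of the identification and of the unitarity of $f\mapsto\widetilde f$ (as a composition of the one-variable Fourier transform and its inverse) fills in exactly the steps the paper leaves implicit.
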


\begin{proof}
First one notes that $\<f,\varepsilon_\X\>_{(\Xi)}=\widetilde f(\X)$ and then invokes Plancherel's Theorem. 
\end{proof}

\section{Weyl systems, the Fourier-Wigner transform}\label{firica}

\begin{Definition}\label{sigmund}
For any $(Z,\zeta)\in\g\times\g^\sharp=\Xi$ one defines a unitary operator ${\sf E}(Z,\zeta)$ in $L^2(\g)$ by
\begin{equation*}\label{friedar}
\[{\sf E}(Z,\zeta)u\]\!(X):=e^{i\<X\mid \zeta\>}u([-Z]\bu X)\,,
\end{equation*}
with adjoint
\begin{equation*}\label{frigider}
\[{\sf E}(Z,\zeta)^*u\]\!(Y)=e^{-i\<Z\bu Y\mid \zeta\>}u(Z\bu Y)\,.
\end{equation*}
\end{Definition}

This extends the notion of {\it Weyl system} (or {\it time-frequency shifts}) from the case $\G=\R^n$.
Since the composition law $\bu$ is polynomial, these operators also act as isomorphisms of the Schwartz space $\S(\g)$ and can be extended to isomorphisms of the space $\S'(\g)$ of tempered distributions. We are going to see below how they fit in the pseudo-differential calculus.

\begin{Lemma}\label{calcul}
Denote by ${\sf Mult}(\phi)$ the operator of multiplication by the function $\phi$\,. For $(Z,\zeta),(Y,\eta)\in\Xi$ one has
\begin{equation*}\label{fulppe}
{\sf E}(Z,\zeta)\,{\sf E}(Y,\eta)={\sf Mult}\big(\Upsilon\big[(Z,\zeta),(Y,\eta);\cdot\big]\big)\,{\sf E}(Z\bu Y,\zeta+\eta)\,,
\end{equation*}
where 
\begin{equation*}\label{sireata}
\Upsilon\big[(Z,\zeta),(Y,\eta);X\big]=\exp\big\{i\<\,[-Z]\bu X-X)\!\mid \!\eta\,\>\big\}\,.
\end{equation*}
\end{Lemma}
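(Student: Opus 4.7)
The proof is a direct calculation that tracks the action of the product on an arbitrary $u\in L^2(\g)$ (or $\S(\g)$) at a point $X\in\g$. My plan is to apply the two operators in succession using Definition \ref{sigmund}, then rearrange the factors so as to isolate on the right the operator $\mathsf{E}(Z\bullet Y,\zeta+\eta)$; the remainder will automatically have the form of multiplication by a function of $X$, and that function will be $\Upsilon[(Z,\zeta),(Y,\eta);\cdot]$.

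Concretely, first I would write
\[
\bigl[\mathsf{E}(Z,\zeta)\mathsf{E}(Y,\eta)u\bigr](X)=e^{i\<X\mid\zeta\>}\bigl[\mathsf{E}(Y,\eta)u\bigr]\!\bigl([-Z]\bu X\bigr)=e^{i\<X\mid\zeta\>}\,e^{i\<[-Z]\bu X\mid\eta\>}\,u\!\bigl([-Y]\bu[-Z]\bu X\bigr).
\]
Then I would invoke the group-theoretic identity $[-Y]\bu[-Z]=[-(Z\bu Y)]$, valid in the group $(\g,\bu)$ because inversion reverses products, to rewrite the argument of $u$ as $[-(Z\bu Y)]\bu X$. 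At this stage the right-hand side already carries a translation by $-(Z\bu Y)$, which is exactly the translation appearing in $\mathsf{E}(Z\bu Y,\zeta+\eta)$.

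The remaining step is to compare the accumulated phase $e^{i\<X\mid\zeta\>}e^{i\<[-Z]\bu X\mid\eta\>}$ with the phase $e^{i\<X\mid\zeta+\eta\>}=e^{i\<X\mid\zeta\>}e^{i\<X\mid\eta\>}$ produced by $\mathsf{E}(Z\bu Y,\zeta+\eta)$. Their ratio equals
\[
\exp\bigl\{i\<[-Z]\bu X-X\mid\eta\>\bigr\}=\Upsilon\bigl[(Z,\zeta),(Y,\eta);X\bigr],
\]
which, being an $X$-dependent scalar of modulus one, factors out as the multiplication operator $\mathsf{Mult}(\Upsilon[(Z,\zeta),(Y,\eta);\cdot])$ applied on the left. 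Combining the three steps gives the claimed identity.

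There is really no obstacle: the only place one must be careful is the non-commutative inversion identity $[-Y]\bu[-Z]=[-(Z\bu Y)]$ and the observation that the ``extra'' phase $e^{i\<[-Z]\bu X\mid\eta\>}-e^{i\<X\mid\eta\>}$ depends nontrivially on $X$, which is precisely the obstruction to $\mathsf{E}$ being a (projective) representation of $\Xi$ and the reason why the correction appears as a multiplication operator rather than a scalar cocycle. By polynomiality of $\bu$ (Remark \ref{gruzav}), the function $X\mapsto[-Z]\bu X-X$ is smooth with polynomial components, so $\Upsilon[(Z,\zeta),(Y,\eta);\cdot]$ is a smooth unimodular function and $\mathsf{Mult}(\Upsilon[\cdot])$ is a well-defined unitary operator on $L^2(\g)$, consistent with the unitarity of both sides of the claimed identity.
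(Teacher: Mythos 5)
Your proof is correct and is exactly the "direct calculation" the paper invokes without writing out: apply the two operators in succession, use $[-Y]\bu[-Z]=-(Z\bu Y)$, and identify the leftover $X$-dependent phase as $\Upsilon$. No gaps; the only cosmetic slip is writing the extra phase as a difference of exponentials rather than their ratio in your closing remark, which does not affect the argument.
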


This follows from a direct calculation. The map ${\sf E}$ is not even a projective representation of the group $\Xi$\,, so standard tools in coorbit theory relying on group representations will not be available.

\smallskip
One also sets 
\begin{equation*}\label{makela}
{\sf E}_Z:={\sf E}(Z,0)\equiv{\sf L}_Z\,,\quad{\sf E}_\zeta:={\sf E}(0,\zeta)\equiv{\sf M}_\zeta={\sf Mult}(\varepsilon_\zeta)\,.
\end{equation*} 
Note the "multiplication relations"
\begin{equation*}\label{CCR}
{\sf L}_Y{\sf L}_Z={\sf L}_{Y\bu Z}\,,\quad {\sf M}_\eta {\sf M}_\zeta={\sf M}_{\eta+\zeta}\,,\quad {\sf L}_Z {\sf M}_\zeta=e^{i\<Z^{-1}\bu(\cdot)-(\cdot)\mid\zeta\>}{\sf M}_\zeta {\sf L}_Z\,,
\end{equation*}
that follow from Lemma \ref{calcul} or are shown directly. So one has (strongly continuous) unitary representation 
\begin{equation*}\label{celedoua}
{\sf M}:(\g^\sharp,+)\to\mathbb U\big[L^2(\g)\big]\quad{\rm and}\quad{\sf L}:(\g,\bu)\to\mathbb U\big[L^2(\g)\big]
\end{equation*} 
that do not commute to each other. 

\begin{Definition}\label{niulaif}
For any $\Y:=(Y,\eta),\,\mathcal Z:=(Z,\zeta)\in\Xi:=\g\times\g^\sharp$ we define the linear contraction
\begin{equation*}\label{automo}
\Th_{\Y,\mathcal Z}:\mathbb B(\H)\to\mathbb B(\H)\,,\quad\Th_{\Y,\Z}(A):={\sf E}(\Y)A\,{\sf E}(\Z)^*={\sf M}_\eta {\sf L}_Y A\,{\sf L}_{-Z}{\sf M}_{-\zeta} \,.
\end{equation*}
\end{Definition}

In particular, $\Th_{\Y,\mathcal Y}$ is an automorphism of the $C^*$-algebra $\mathbb B(\H)$\,. There are no simple group properties of the family.

\begin{Remark}\label{besides}
As said above, besides being unitary operators in $L^2(\g)$\,, the elements ${\sf E}(\Z)$ of the Weyl system can also be seen as isomorphisms of $\S(\g)$ or of its dual. Therefore $\Th_{\Y,\Z}$ also acts on $\mathbb L[\S(\g)]\,,\,\mathbb L[\S'(\g)]\,,\mathbb L[\S(\g),\S'(\g)]$ and $\mathbb L[\S'(\g),\S(\g)]$\,.
\end{Remark}

\begin{Definition}\label{wigner}
For $\,u,v\in\H:=L^2(\g)$ one sets $\mathcal E_{u,v}\equiv\mathcal E_{u\otimes v}:\g\times\g^\sharp\to\mathbb C$ by
\begin{equation*}\label{vigner}
\mathcal E_{u,v}(Z,\zeta):=\<{\sf E}(Z,\zeta)u,v\>_\H=\int_\g e^{i\<Y\mid\zeta\>}u([-Z]\bu Y)\overline{v(Y)}dY.
\end{equation*}
and call it {\rm the Fourier-Wigner transform}.
\end{Definition}

\begin{Lemma}\label{startortog}
The Fourier-Wigner transform extends to a unitary map 
\begin{equation*}\label{FW}
\mathcal E\!:\H\otimes\overline\H\cong L^2(\g\times\g)\to L^2(\Xi)\,.
\end{equation*}
It also defines isomorphisms
\begin{equation*}\label{theizo}
\mathcal E\!:\S(\g)\,\overline\otimes\,\S(\g)\cong\S(\g\times\g)\to \S(\Xi)\,,\quad\mathcal E\!:\S'(\g)\,\overline\otimes\,\S'(\g)\cong\S'(\g\times\g)\to \S'(\Xi)\,.
\end{equation*}
\end{Lemma}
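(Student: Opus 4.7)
The plan is to factor $\mathcal E$ as the composition of a substitution operator and a partial Fourier transform, each of which is already known to be a topological isomorphism at the $L^2$, $\S$, and $\S'$ levels. Setting $w(X,Y):=u(X)\overline{v(Y)}$, the kernel of $u\otimes\bar v$ under the canonical unitary identification $\H\otimes\overline\H\cong L^2(\g\times\g)$, and introducing
$$
(Sw)(Z,Y):=w\big([-Z]\bu Y,\,Y\big),\qquad (TK)(Z,\zeta):=\int_\g e^{i\<Y\mid\zeta\>}K(Z,Y)\,dY,
$$
Definition \ref{wigner} can be read as $\mathcal E_{u,v}(Z,\zeta)=(T\circ S\,w)(Z,\zeta)$, so $\mathcal E=T\circ S$ once $u\otimes\bar v$ is identified with $w$.

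First I would show that $S$ is a unitary automorphism of $L^2(\g\times\g)$ and a topological automorphism of $\S(\g\times\g)$ and $\S'(\g\times\g)$. The underlying map $\Phi:(Z,Y)\mapsto\big([-Z]\bu Y,Y\big)$ is a polynomial diffeomorphism with polynomial inverse $(X,Y)\mapsto\big(Y\bu[-X],Y\big)$, both components being finite polynomial expressions by nilpotency together with \eqref{diostraso}. For each fixed $Z$, the map $Y\mapsto[-Z]\bu Y$ is left translation in $(\g,\bu)$ and hence preserves the Lebesgue--Haar measure, so Fubini gives $\|Sw\|_2=\|w\|_2$. The polynomiality of $\Phi$ and $\Phi^{-1}$ together with the chain rule supply continuity of $S$ and $S^{-1}$ on $\S(\g\times\g)$, and duality extends $S$ to a topological automorphism of $\S'(\g\times\g)$.

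Second, the partial Fourier transform $T$ is a unitary isomorphism $L^2(\g\times\g)\to L^2(\Xi)$ by the scalar Plancherel theorem of Section \ref{bogart} applied pointwise in $Z$, a topological isomorphism $\S(\g\times\g)\to\S(\Xi)$, and extends to a topological isomorphism $\S'(\g\times\g)\to\S'(\Xi)$. Combining the two steps and using the standard identifications $\H\otimes\overline\H\cong L^2(\g\times\g)$, $\S(\g)\,\overline\otimes\,\S(\g)\cong\S(\g\times\g)$ and $\S'(\g)\,\overline\otimes\,\S'(\g)\cong\S'(\g\times\g)$ (Schwartz kernel theorem for the latter two), the three assertions of the Lemma follow at once.

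The only nontrivial technical point is the Schwartz continuity of $S$: one must bound every Schwartz seminorm of $w\circ\Phi$ by finitely many Schwartz seminorms of $w$. Because each component of $\Phi$ and $\Phi^{-1}$ is polynomial in the coordinates, the chain rule expands any derivative of $w\circ\Phi$ into a finite sum of terms of the form ``polynomial in $(Z,Y)$ times a derivative of $w$ evaluated at $\Phi(Z,Y)$'', and the rapid decay of $w$ together with the polynomial bounds on $\Phi^{-1}$ dominate all such expressions. No other step presents any real difficulty.
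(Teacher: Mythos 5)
Your proof is correct and follows essentially the same route as the paper, which also factors $\mathcal E$ as a (Schwartz-compatible, measure-preserving) change of variables followed by a partial Fourier transform and invokes nuclearity of $\S(\g)$ for the tensor-product identifications. One small repair: to get $\|Sw\|_2=\|w\|_2$ you should fix $Y$ and substitute $X=[-Z]\bu Y$ in the $Z$-integral (using unimodularity and invariance of Lebesgue measure under $Z\mapsto -Z$), since fixing $Z$ and translating $Y$ does not directly work because $Y$ occurs in both arguments of $w$; equivalently, note that $\Phi$ has Jacobian of absolute value $1$ everywhere.
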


\begin{proof}
It is composed of a partial Fourier transformation and a unitary change of variables, that is also $\S$-compatible. We denoted by $\overline{\otimes}$ the completed projective tensor product, but we recall that $\S(\g)$ is nuclear.
\end{proof}

In particular, one has {\it the orthogonality relations}:
\begin{equation}\label{orthog}
\big\<\mathcal E_{u,v},\mathcal E_{u'\!,v'}\big\>_{L^2(\Xi)}=\<u,u'\>_\H\,\<v',v\>_\H\,.
\end{equation}

From now on, we are going to use the notation $\<\cdot,\cdot\>_{(\Xi)}$ both for the scalar product in $L^2(\Xi)$ and for the duality between the Schwartz space on $\Xi$ and the space of temperate distributions.
Similarly for $\<\cdot,\cdot\>_{(\g)}$\,.

\begin{Remark}\label{uwe}
In \cite{M}, the Berezin-Toeplitz calculus for suitable symbols $h:\G\times\g^\sharp\to\mathbb C$ has been introduced and studied. As in the present article, $\G$ was a connected, simply connected nilpotent Lie group with Lie algebra $\g^\sharp$, but the Berezin operators act in $L^2(\G)$ or $\S(\G)$\,. By suitably composing with the diffeomorphism $\exp:\g\to\G$\, (both at the level of vectors and symbols) this may be recast in the present setting. For convenience of the reader, we indicate the basic definition, making use of the objects ${\sf E}$ and $\mathcal E$ introduced above. The full treatment in \cite{M} can easily transported on $\g\times\g^\sharp$.

\smallskip
Let $\,w\in \S(\g)$ be a normalized vector (it may also be chosen in $L^2(\g)$). We define in $L^2(\g)$
\begin{equation}\label{albina}
{\sf Ber}_w(h):=\int_\g\int_{\g^\sharp}\!h(X,\xi)\,\<\cdot,{\sf E}(X,\xi)^*w\>{\sf E}(X,\xi)^*w\,dXd\xi\,.
\end{equation}
The rigorous definition is in weak sense: for any $u,v\in L^2(\g)$ one has
\begin{align}
\big\<{\sf Ber}_w(h)u,v\big\>_{(\g)}:=&\int_\g\int_{\g^\sharp}\!h(X,\xi)\big\<u,{\sf E}(X,\xi)^*w\big\>\big\<{\sf E}(X,\xi)^*w,v\big\>\,\,dXd\xi\\
=&\int_\g\int_{\g^\sharp}h(X,\xi)\,\mathcal E_{u,w}(X,\xi)\,\overline{\mathcal E_{v,w}(X,\xi)}\,dXd\xi\\
=&\,\big\<h,\overline{\mathcal E_{u,w}}\,\mathcal E_{v,w}\big\>_{(\Xi)}\,.
\end{align}
This last expression and the properties of the Fourier-Wigner transform allow various interpretations of this formula, under various conditions on $u,v,w,h$\,.
\end{Remark}

\section{Pseudo-differential operators}\label{fifirin}

One has the quantizations of the "phase space" $\mathfrak g\times\mathfrak g^\sharp\ni(X,\xi)$\: 

\begin{align}\label{garaci}
\begin{split}
&{\sf Op}:L^2(\mathfrak g\times\mathfrak g^\sharp)\to\mathbb B^2\big[L^2(\g)\big]\,,\\
\big[&{\sf Op}(f)u\big](X)=\int_\g\int_{\g^\sharp}\!e^{i\<X\bu(-Y)\mid \xi\>}f\!\left(X,\xi\right)u(Y)\,dYd\xi\,.
\end{split}
\end{align}

\begin{Remark}\label{atreia}
{\rm Examining the kernel of ${\sf Op}(f)$\,, one easily sees that ${\sf Op}:L^2(\Xi)\to\mathbb B^2\big[L^2(\g)\big]$ is indeed an isomorphism. For similar reasons, by restriction or extension, one also has topological linear isomorphisms
\begin{equation*}\label{treiaka}
{\sf Op}:\S(\Xi)\overset{\sim}{\longrightarrow}\mathbb L\big[\S'(\g),\S(\g)\big]\,,\quad
{\sf Op}:\S'(\Xi)\overset{\sim}{\longrightarrow}\mathbb L\big[\S(\g),\S'(\g)\big]\,.
\end{equation*}
}
\end{Remark}

One may justify (at least heuristically) formula \eqref{garaci} in various ways: 

\begin{itemize}
\item
One could start with a canonical dynamical system, built over the left action of $(\g,\bullet)$ on itself and then raised to a $C^*$-action on function defined on $\g$\,. To such a data, there is a canonical construction of a $C^*$-algebra (the crossed product) and of a "Schr\"odinger representation" in $\H:=L^2(\g)$\,. The calculus ${\sf Op}$ is then obtained from this Schr\"odinger representation by composing with a partial Fourier transformation. For details we refer to \cite{MR}.
\item
In terms of the Weyl system $\big\{{\sf E}(Z,\zeta)\!\mid\!(Z,\zeta)\in\Xi\big\}$ and the total Fourier transformation $f\to\widetilde f$\,, one can write 
\begin{equation*}\label{stephann}
{\sf Op}(f)=\int_{\g}\!\int_{\g^\sharp}\widetilde{f}(Z,\zeta){\sf E}(Z,\zeta)\,dZd\zeta\,.
\end{equation*}
\item
In the simple Abelian case $\G\equiv\g=\R^n$ one has $X\bu(-Y)=X-Y$ and \eqref{garaci} boils down to the Kohn-Nirenberg quantization.
\end{itemize}

Actually, in terms of the Fourier-Wigner transform, one can write
\begin{equation}\label{napastta}
\<{\sf Op}(f)u,v\>_{(\g)}=\big\<\widetilde f,\overline{\mathcal E_{u,v}}\big\>_{(\Xi)}\,,
\end{equation}
allowing various types of ingredients $u,v,f$, having in view the properties of the transformation $\mathcal E$ and of the dualities. By using Plancherel's Theorem one could rewrite \eqref{napastta} as
\begin{equation}\label{napasttaka}
\<{\sf Op}(f)u,v\>_{(\g)}=\big\<f,\mathcal W_{u,v}\big\>_{(\Xi)}\,,
\end{equation}
and $(u,v)\to\mathcal W_{u,v}$ could be called {\it the Wigner transformation}.

\smallskip
It is easy to prove the next result:

\begin{Proposition}\label{cateva}
\begin{enumerate}
\item[(i)]
One has
\begin{equation*}\label{stephannen}
{\sf E}(Z,\zeta)={\sf Op}\big(\varepsilon_{(Z,\zeta)}\big)\,,\quad\forall\,(Z,\zeta)\in\Xi\,.
\end{equation*}
In particular ${\sf L}_Z={\sf Op}(\varepsilon_Z)$ and ${\sf M}_\zeta={\sf Op}(\varepsilon_\zeta)$\,.
\item[(ii)]
One has
\begin{equation*}\label{amandoua}
{\sf Op}(\phi\otimes\psi)={\sf Mult}(\phi){\sf Conv}_L(\mathcal F^{-1}\psi)\,,
\end{equation*}
the product between a multiplication operator and a left convolution operator (that is right invariant). Particular cases:
\begin{equation*}\label{farfurii}
f(X,\xi):=\phi(X)\ \Longrightarrow\ {\sf Op}(f)u=\phi u\,,
\end{equation*}
\begin{equation*}\label{tacamuri}
f(X,\xi):=\psi(\xi)\ \Longrightarrow\ {\sf Op}(f)u=(\mathcal F^{-1}\psi)\star u\,.
\end{equation*}
\item[(iii)]
The rank one operator $\<\cdot,v\>u$ coincides with ${\sf Op}\big(\mathcal E_{u,v}\big)$\,.
\end{enumerate}
\end{Proposition}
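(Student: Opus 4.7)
For parts (i) and (ii), the argument is direct substitution into the definition \eqref{garaci} of ${\sf Op}$, followed by a delta-function collapse. For (i), I would insert $\varepsilon_{(Z,\zeta)}(X,\xi)=e^{i\langle X\mid\zeta\rangle}e^{-i\langle Z\mid\xi\rangle}$, pull $e^{i\langle X\mid\zeta\rangle}$ outside the double integral, and recognize the remaining $\xi$-integral $\int_{\g^\sharp}e^{i\langle X\bu(-Y)-Z\mid\xi\rangle}d\xi$ as the Dirac mass on $\g$ supported where $X\bu(-Y)=Z$; the group relation $X\bu(-Y)=Z\iff Y=(-Z)\bu X$, together with the unimodularity of $\bu$, then yields exactly $[{\sf E}(Z,\zeta)u](X)$. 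Setting $\zeta=0$ or $Z=0$ produces the special cases ${\sf L}_Z={\sf Op}(\varepsilon_Z)$ and ${\sf M}_\zeta={\sf Op}(\varepsilon_\zeta)$.

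For (ii), the same mechanism applied to $f=\phi\otimes\psi$ factors $\phi(X)$ out of the double integral and identifies $\int_{\g^\sharp}e^{i\langle X\bu(-Y)\mid\xi\rangle}\psi(\xi)d\xi=(\mathcal{F}^{-1}\psi)(X\bu(-Y))$, so the whole expression is manifestly ${\sf Mult}(\phi)$ composed with the left convolution by $\mathcal{F}^{-1}\psi$. The two particular cases follow by specializing $\psi\equiv 1$ (whence $\mathcal{F}^{-1}\psi=\delta_0$ and the convolution collapses to the identity, giving $u\mapsto \phi u$), or $\phi\equiv 1$; alternatively one redoes the delta-function collapse directly with the other factor trivial.

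For (iii), I would pair $\langle{\sf Op}(\mathcal{E}_{u,v})w,w'\rangle_{(\g)}$ against $w,w'\in\S(\g)$ and use the integral representation ${\sf Op}(f)=\int_\Xi\widetilde{f}(\Z)\,{\sf E}(\Z)\,d\Z$ (the second justification of \eqref{garaci}) together with the identity $\langle{\sf E}(\Z)w,w'\rangle_{(\g)}=\mathcal{E}_{w,w'}(\Z)$ from Definition \ref{wigner}. This yields
\begin{equation*}
\langle{\sf Op}(\mathcal{E}_{u,v})w,w'\rangle_{(\g)}=\int_\Xi\widetilde{\mathcal{E}_{u,v}}(\Z)\,\mathcal{E}_{w,w'}(\Z)\,d\Z.
\end{equation*}
Invoking Plancherel for the total Fourier transform (as already used in Lemma \ref{ciulama}) converts the right side into an $L^2(\Xi)$-pairing of $\mathcal{E}_{u,v}$ against the Fourier-Wigner transform of $(w,w')$ up to a reflection/conjugation coming from $\widetilde{\cdot}$, and the orthogonality relation \eqref{orthog} collapses the integral to $\langle w,v\rangle\langle u,w'\rangle$, which is exactly the matrix element of the rank-one operator $\langle\cdot,v\rangle u$.

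The main obstacle lies in the conjugation bookkeeping of (iii): one must track carefully how the total Fourier transform $\widetilde{\cdot}$ interchanges the bilinear pairing $\int\widetilde{f}\,g$ with the sesquilinear $L^2(\Xi)$ scalar product so that \eqref{orthog} is applied with the correct ordering of the four vectors. By contrast, (i) and (ii) are essentially single-line delta-function calculations whose only genuinely group-theoretic input is the identity $X\bu(-Y)=Z\iff Y=(-Z)\bu X$ together with the unimodularity of the $\bu$-law.
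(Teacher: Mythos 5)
Your treatment of (i) and (ii) is correct and is surely the intended argument (the paper offers no proof beyond ``it is easy''): the $\xi$-integration produces $\delta\big(X\bu(-Y)-Z\big)$, respectively $(\mathcal F^{-1}\psi)\big(X\bu(-Y)\big)$, and the substitution $Y=(-Z)\bu X$ is measure preserving because Lebesgue measure on $\g$ is bi-invariant Haar measure for $\bu$ and $Y\mapsto -Y$ has unit Jacobian.

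The gap is in (iii), and it sits exactly in the step you postponed as ``conjugation bookkeeping'': that bookkeeping cannot be made to close, because $\widetilde{\mathcal E_{u,v}}$ is \emph{not} of the form $\overline{\mathcal E_{a,b}}$ for any vectors $a,b$, so the orthogonality relation \eqref{orthog} does not apply to $\int_\Xi\widetilde{\mathcal E_{u,v}}\,\mathcal E_{w,w'}\,d\Z$. Carrying the computation out: the $\eta$-integration in \eqref{tutala} forces the inner variable of Definition \ref{wigner} to equal $-Z$, giving
\begin{equation*}
\widetilde{\mathcal E_{u,v}}(Z,\zeta)=\overline{v(-Z)}\int_\g e^{-i\<Y\mid\zeta\>}\,u\big([-Y]\bu[-Z]\big)\,dY\,,
\end{equation*}
and the pairing with $\mathcal E_{w,w'}$ then yields $\int_\g\!\int_\g u([-Y]\bu S)\,\overline{v(S)}\,w(S\bu Y)\,\overline{w'(Y)}\,dS\,dY$, which does not factor as $\<w,v\>\<u,w'\>$. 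Equivalently, by \eqref{acrisor} the integral kernel of ${\sf Op}\big(\mathcal E_{u,v}\big)$ is $u\big([-X]\bu Y\bu[-X]\big)\,\overline{v\big(Y\bu[-X]\big)}$ rather than $u(X)\overline{v(Y)}$; already for $\g=\R^n$ one finds $\big[{\sf Op}(\mathcal E_{u,v})w\big](X)=\int u(T-X)\overline{v(T)}w(X+T)\,dT$, which differs from $\<w,v\>u(X)$ for Gaussian $u=v=w$. What your Plancherel-plus-orthogonality scheme does prove, verbatim, is that $\<\cdot,v\>u={\sf Op}(g)$ for the symbol $g$ determined by $\widetilde g=\overline{\mathcal E_{v,u}}$ (note the order of $u$ and $v$), i.e. for the \emph{Wigner} transform of \eqref{napasttaka} rather than the Fourier--Wigner transform: indeed $\<{\sf Op}(g)w,w'\>_{(\g)}=\int_\Xi\overline{\mathcal E_{v,u}}\,\mathcal E_{w,w'}=\<\mathcal E_{w,w'},\mathcal E_{v,u}\>_{(\Xi)}=\<w,v\>\<u,w'\>$ by \eqref{orthog}. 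So either (iii) must be read with $\mathcal W$ in place of $\mathcal E$, or it is inconsistent with \eqref{napastta} and \eqref{orthog}; as it stands, your proof of (iii) cannot be completed for the literal statement.
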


\begin{Remark}\label{fricca}
In \cite[Sect.\,6]{M}, a connection has been established between pseudo differential and Berezin-type operators with symbols defined on $\G\times\g^\sharp$. By properly composing with the exponential diffeomorphism, one lands in our framework and finds that the Berezin operator ${\sf Ber}_w(h)$ given in \eqref{albina} is an operator of the form \eqref{garaci}, with
\begin{align}
f(X,\xi):=\int_\g\int_\g\int_{\g^\sharp}& e^{-i\<Y\mid\xi\>}\,e^{i\<\log(Z\bullet[-Y]\bullet X)-Z\bullet X\mid\zeta\>}\\
&h(Z,\zeta)\,w(Z\bullet X)\,\overline{w(Z\bullet[-Y]\bullet X)}\,dY dZ d\zeta\,.
\end{align}
\end{Remark}

We treat now the intrinsic algebraic structure on symbols. The pseudo-differential operator \eqref{garaci} with symbol $f$ is an integral operator with kernel $\,{\sf Ker}_{f}:\g\times\g\rightarrow\mathbb C$ given by
\begin{equation}\label{acrisor}
{\sf Ker}_{f}(X,Y)=\int_{\g^\sharp}\!e^{i\<X\bu[-Y]\mid \xi\>}f(X,\xi)d\xi=\big[\big({\rm id}\otimes\mathcal F^{-1}\big)f\big]\big(X,X\bu[-Y]\big)\,.
\end{equation}
Inverting, the symbol may be recuperated from the kernel by means of the formula
\begin{equation}\label{recup}
f(X,\xi)=\int_\g e^{-i\<Y\mid\xi\>}{\sf Ker}_f\big(X,[-Y]\bu X\big)dY.
\end{equation}

\begin{Proposition}\label{algstruct}
\begin{enumerate}
\item[(i)]
The symbol $f\#g$ of the product ${\sf Op}(f){\sf Op}(g)$ is
\begin{align}\label{roduct}
(f\#g)(X,\xi)=\int_\g\!\int_\g \int_{\g^\sharp}\!\int_{\g^\sharp}&\overline{\varepsilon_\xi(Y)}\,\varepsilon_\eta(X\bu[-Z])\,\varepsilon_\zeta(Z\bu[-X]\bu Y)\\
&f(X,\eta)g(Z,\zeta)dYdZd\eta d\zeta\,.
\end{align}
\item[(ii)]
The symbol $f^\#$ of the adjoint ${\sf Op}(f)^*$ is
\begin{equation}\label{adjunct}
f^\#(X,\xi)=\int_\g\!\int_{\g^\sharp}\! e^{i\<Y\mid\eta-\xi\>}\,\overline{f([-Y]\bu X,\eta)}\,dYd\eta\,.
\end{equation}
\end{enumerate}
In particular $(\phi\otimes 1)^\#=\overline{\phi}\otimes 1$ and $(1\otimes\psi)^\#=1\otimes\overline{\psi}$\,.
\end{Proposition}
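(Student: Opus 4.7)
The plan is to reduce both parts to manipulations at the level of integral kernels, which are invertibly related to symbols by \eqref{acrisor} and \eqref{recup}, and then recognize the resulting phases as the exponentials of Definition \ref{diversitate}. For part (i), I would start from the standard composition rule for integral operators,
\begin{equation*}
{\sf Ker}_{f\#g}(X,Y)=\int_\g {\sf Ker}_f(X,Z)\,{\sf Ker}_g(Z,Y)\,dZ,
\end{equation*}
substitute \eqref{acrisor} for each factor, and then apply the inversion formula \eqref{recup} at the argument $(X,[-Y]\bu X)$ to recover $(f\#g)(X,\xi)$. This step requires the group-theoretic identity $\big([-Y]\bu X\big)^{\bullet}=[-X]\bu Y$ coming from the antihomomorphism rule for $\bu$-inverses, which turns the phase appearing in ${\sf Ker}_g\big(Z,[-Y]\bu X\big)$ into $\<Z\bu[-X]\bu Y\mid\zeta\>$. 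The three remaining phases $-\<Y\mid\xi\>$, $\<X\bu[-Z]\mid\eta\>$ and $\<Z\bu[-X]\bu Y\mid\zeta\>$ are then read off Definition \ref{diversitate} as $\overline{\varepsilon_\xi(Y)}$, $\varepsilon_\eta(X\bu[-Z])$, and $\varepsilon_\zeta(Z\bu[-X]\bu Y)$, and a Fubini rearrangement produces \eqref{roduct}.

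For part (ii), I would use the classical formula for the kernel of the Hilbert-space adjoint, ${\sf Ker}_{{\sf Op}(f)^*}(X,Y)=\overline{{\sf Ker}_f(Y,X)}$, and apply \eqref{recup} to obtain
\begin{equation*}
f^\#(X,\xi)=\int_\g e^{-i\<Y\mid\xi\>}\,\overline{{\sf Ker}_f([-Y]\bu X, X)}\,dY.
\end{equation*}
Substituting \eqref{acrisor} for ${\sf Ker}_f([-Y]\bu X, X)$ yields a phase $\<([-Y]\bu X)\bu[-X]\mid\eta\>$, which collapses by associativity of $\bu$ to $\<[-Y]\mid\eta\>=-\<Y\mid\eta\>$. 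Taking the complex conjugate and combining with the outer factor $e^{-i\<Y\mid\xi\>}$ yields exactly \eqref{adjunct}.

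The two special cases at the end fall out by executing one of the inner integrations: for $f(X,\xi)=\phi(X)$, integrating first in $\eta$ produces a Dirac mass at $Y=0$, so the $Y$-integral collapses to $\overline{\phi(X)}$; for $f(X,\xi)=\psi(\xi)$, integrating first in $Y$ produces a Dirac mass at $\eta=\xi$, collapsing the $\eta$-integral to $\overline{\psi(\xi)}$. The computations are essentially routine Fubini manipulations, and they proceed without difficulty on a dense class such as $\S(\Xi)$. The only point that requires care is the consistent use of the non-commutative $\bu$-inverse identity $([-Y]\bu X)^{\bullet}=[-X]\bu Y$; if one wishes to make the formulas work beyond Schwartz symbols, the oscillatory integrals have to be reinterpreted via the dualities recalled in Remark \ref{atreia}, but for the statement as given this level of care is not needed.
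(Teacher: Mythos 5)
Your proposal is correct and follows essentially the same route as the paper: both parts are proved by passing to integral kernels via \eqref{acrisor}, using the kernel composition rule (resp.\ the adjoint rule $K^\square(X,Y)=\overline{K(Y,X)}$), and recovering the symbol with the inversion formula \eqref{recup}, with the same two group-theoretic simplifications $\big([-Y]\bu X\big)^{\bullet}=[-X]\bu Y$ and $([-Y]\bu X)\bu[-X]=-Y$. Your added derivation of the two special cases via Dirac masses is a harmless supplement to what the paper merely asserts.
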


\begin{proof}
(i) One computes
\begin{align}
(f\#g)(X,\xi)&=\big({\sf Ker}^{-1}\big[{\sf Ker}_f\circ{\sf Ker}_g\big]\big)(X,\xi)\\
&=\int_\g e^{-i\<Y\mid\xi\>}\big({\sf Ker}_f\circ{\sf Ker}_g\big)\big(X,[-Y]\bu X\big)dY\\
&=\int_\g\!\int_\g e^{-i\<Y\mid\xi\>}{\sf Ker}_f(X,Z){\sf Ker}_g\big(Z,[-Y]\bu X\big)dYdZ\\
&=\int_\g\!\int_\g \int_{\g^\sharp}\!\int_{\g^\sharp}e^{-i\<Y\mid\xi\>}e^{i\<X\bu[-Z]\mid \eta\>}e^{i\<Z\bu[-X]\bu Y\mid \zeta\>}f(X,\eta)g(Z,\zeta)dYdZd\eta d\zeta\,.
\end{align}

(ii) If $K$ is the kernel of an integral operator, the kernel of the adjoint is given by $K^\square(X,Y):=\overline{K(Y,X)}$\,. Hence, by \eqref{acrisor} and \eqref{recup}
\begin{align}
f^\#(X,\xi)&=\int_\G e^{-i\<Y\mid\xi\>}{\sf Ker}_{f^\#}\big(X,[-Y]\bu X\big)dY\\
&=\int_\G e^{-i\<Y\mid\xi\>}{\sf Ker}_{f}^\square\big(X,[-Y]\bu X\big)dY\\
&=\int_\G e^{-i\<Y\mid\xi\>}\overline{{\sf Ker}_{f}\big([-Y]\bu X,X\big)}dY\\
&=\int_\G e^{-i\<Y\mid\xi\>}\overline{\int_{\g^\sharp}\!e^{-i\<Y\mid \eta\>}f([-Y]\bu X,\eta)d\eta}\,dY\\
&=\int_\G\int_{\g^\sharp}\! e^{i\<Y\mid\eta-\xi\>}\overline{f([-Y]\bu X,\eta)}d\eta\,dY.
\end{align}
\end{proof}

\begin{Corollary}\label{conzecinta}
For every $\Z\in\Xi$ one has ${\sf Op}(\varepsilon_\Z)^*={\sf Op}\big(\varepsilon_\Z^\#\big)$\,, with
\begin{equation*}\label{transpun}
\varepsilon_{(Z,\zeta)}^\#(X,\xi)=e^{i\<Z\mid\xi\>}e^{-i\<Z\bu X\mid\zeta\>}.
\end{equation*}
\end{Corollary}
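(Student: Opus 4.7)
My plan is to recognize this as the computation of the symbol of the adjoint ${\sf E}(Z,\zeta)^*$, given that by Proposition \ref{cateva}(i) one has ${\sf E}(Z,\zeta)={\sf Op}(\varepsilon_{(Z,\zeta)})$. There are two natural routes. Route (A) is to apply formula \eqref{adjunct} from Proposition \ref{algstruct}(ii) directly with $f=\varepsilon_{(Z,\zeta)}$, interpreting the (non-absolutely-convergent) integrals distributionally; the $\eta$-integration produces a Dirac mass in $Y$ localizing at $Y=-Z$, after which the $Y$-integral is immediate. Route (B) bypasses that formality: compute ${\sf E}(Z,\zeta)^*$ from Definition \ref{sigmund}, read off its distributional kernel, and recover the symbol via \eqref{recup}. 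I would follow route (B) as the cleaner option.

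From Definition \ref{sigmund}, $[{\sf E}(Z,\zeta)^*u](X)=e^{-i\<Z\bu X\mid\zeta\>}u(Z\bu X)$, so the distributional integral kernel of ${\sf E}(Z,\zeta)^*$ is
\begin{equation*}
K(X,Y)=e^{-i\<Z\bu X\mid\zeta\>}\,\delta\big(Y-Z\bu X\big).
\end{equation*}
Inserting $K$ in place of ${\sf Ker}_f$ in \eqref{recup} gives
\begin{equation*}
\varepsilon_{(Z,\zeta)}^\#(X,\xi)=e^{-i\<Z\bu X\mid\zeta\>}\int_\g e^{-i\<Y\mid\xi\>}\,\delta\big([-Y]\bu X-Z\bu X\big)\,dY.
\end{equation*}
The map $Y\mapsto[-Y]\bu X$ is the composition of $(\g,\bu)$-inversion with right translation by $X$; as $dY$ is bi-invariant Haar on $(\g,\bu)$, this map preserves Haar measure, so its Jacobian determinant is one. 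The delta therefore localizes $Y$ at the unique solution $Y=-Z$ of $[-Y]\bu X=Z\bu X$, and one obtains $\varepsilon_{(Z,\zeta)}^\#(X,\xi)=e^{i\<Z\mid\xi\>}e^{-i\<Z\bu X\mid\zeta\>}$, which is the claim.

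The only subtle point is the Jacobian bookkeeping for $Y\mapsto[-Y]\bu X$, but this is handled by the bi-invariance of Haar (equivalently, in view of \eqref{diostraso} the remainder $R$ is polynomial of degree $\ge 2$, so the differential of right translation is unipotent and has determinant one). Two sanity checks corroborate the formula: setting $\zeta=0$ yields $\varepsilon_Z^\#=\varepsilon_{-Z}$, consistent with ${\sf L}_Z^*={\sf L}_{-Z}$; setting $Z=0$ yields $\varepsilon_\zeta^\#=\overline{\varepsilon_\zeta}$, consistent with ${\sf M}_\zeta^*={\sf Mult}(\overline{\varepsilon_\zeta})$.
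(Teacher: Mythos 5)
Your proposal is correct and matches the paper's own (one-line) proof, which offers exactly your two routes: apply the adjoint formula \eqref{adjunct}, or check directly that ${\sf Op}\big(\varepsilon_\Z^\#\big)={\sf E}(\Z)^*$. Your Route (B) is a careful write-up of the direct check via the kernel correspondence \eqref{acrisor}--\eqref{recup}, with the Jacobian point correctly handled by unimodularity of the nilpotent group.
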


\begin{proof}
This follows from \eqref{adjunct}, or by checking directly that ${\sf Op}\big(\varepsilon_\Z^\#\big)={\sf E}(\Z)^*$ for every $\Z\in\Xi$\,.
\end{proof}

One already has the linear topological isomorphism of Gelfand triples
\begin{equation*}\label{stupefict}
\begin{diagram}
\node{\S(\Xi)}\arrow{s,l}{\sf Op}\arrow{e,t}{}\node{L^2(\Xi)}\arrow{e,t}{} \arrow{s,l}{\sf Op}\node{\S'(\Xi)}\arrow{s,l}{{\sf Op}}\\ 
\node{\mathbb L\big[\S'(\g),\S(\g)\big]}\arrow{e,t}{}\node{\mathbb B^2\big[L^2(\g)\big]}\arrow{e,t}{}\node{\mathbb L\big[\S(\g),\S'(\g)\big]}
\end{diagram}
\end{equation*}
The horizontal arrows are linear continuous dense embeddings. The first vertical arrow is also an isomorphism of $^*$-algebras. Taking into account the fact that $\mathbb B^2\big[L^2(\g)\big]$ is a $H^*$-algebra (i.e. a complete Hilbert algebra) with respect to the operator product, the usual adjoint and the scalar product associated to the trace, one gets easily

\begin{Lemma}\label{hilbalg}
\begin{enumerate}
\item[(i)] 
$\big(L^2(\Xi),\#\,,^\#,\<\cdot,\cdot\>_{L^2(\Xi)}\big)$ is a $H^*$-algebra.
\item[(ii)]
$\big(\S(\Xi),\#,^\#\<\cdot,\cdot\>_{L^2(\Xi)}\big)$ is a Hilbert algebra.
\end{enumerate}
\end{Lemma}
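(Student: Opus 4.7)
The strategy is to transport the $H^*$-algebra structure of $\mathbb B^2[L^2(\g)]$ to $L^2(\Xi)$ along the map ${\sf Op}$, and then restrict. Concretely I need three ingredients: (a) ${\sf Op}:L^2(\Xi)\to\mathbb B^2[L^2(\g)]$ is a linear isometric bijection; (b) ${\sf Op}(f\#g)={\sf Op}(f){\sf Op}(g)$ and ${\sf Op}(f^\#)={\sf Op}(f)^*$; (c) $\big(\mathbb B^2[L^2(\g)],\cdot,{}^*,\<\cdot,\cdot\>_{\rm HS}\big)$ is an $H^*$-algebra. Item (c) is classical, and item (b) is built into the very definitions \eqref{roduct} and \eqref{adjunct}: the operations $\#,{}^\#$ were defined as the pull-backs, through ${\sf Op}$, of composition and adjoint of pseudo-differential operators.

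The key point, therefore, is (a). I would prove that ${\sf Op}$ is a unitary map by following the formula \eqref{acrisor} for its integral kernel: $f\mapsto{\sf Ker}_f$ is the composition of the partial inverse Fourier transformation ${\rm id}\otimes\mathcal F^{-1}$ (unitary from $L^2(\g\times\g^\sharp)$ to $L^2(\g\times\g)$ by Plancherel) with the change of variable $(X,Y)\mapsto (X,X\bu[-Y])$ on $\g\times\g$, whose Jacobian equals $1$ since $\bu$ is polynomial with linear part $X+Y$ and the higher terms $R(X,Y)$ of \eqref{diostraso} depend only on iterated brackets (so they contribute strictly triangular corrections that do not alter the determinant). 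Therefore $\|{\sf Op}(f)\|_{\rm HS}=\|{\sf Ker}_f\|_{L^2(\g\times\g)}=\|f\|_{L^2(\Xi)}$, and polarization yields
\begin{equation*}
\<f,g\>_{(\Xi)}=\<{\sf Op}(f),{\sf Op}(g)\>_{\rm HS}={\rm Tr}\big({\sf Op}(f){\sf Op}(g)^*\big).
\end{equation*}

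Combining (a), (b), (c), assertion (i) is immediate: associativity of $\#$ and the involution identities $(f^\#)^\#=f$, $(f\#g)^\#=g^\#\#f^\#$ follow by applying ${\sf Op}$ and using the corresponding identities in $\mathbb B^2[L^2(\g)]$. The $H^*$-compatibility $\<f\#g,h\>_{(\Xi)}=\<g,f^\#\#h\>_{(\Xi)}=\<f,h\#g^\#\>_{(\Xi)}$ is obtained from the analogous trace identities $\<AB,C\>_{\rm HS}=\<B,A^*C\>_{\rm HS}=\<A,CB^*\>_{\rm HS}$.

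For (ii) I would observe that by Remark \ref{atreia} the map ${\sf Op}$ restricts to a linear bijection $\S(\Xi)\to\mathbb L[\S'(\g),\S(\g)]$. The latter space is a $^*$-subalgebra of $\mathbb B^2[L^2(\g)]$ (its elements have Schwartz kernels, hence are trace-class, in particular Hilbert-Schmidt; composition and adjoint preserve the class). Moreover $\S(\Xi)$ is dense in $L^2(\Xi)$, so via ${\sf Op}$ the image is dense in $\mathbb B^2[L^2(\g)]$. Hence $\S(\Xi)$ inherits from (i) the structure of a $^*$-algebra with inner product, multiplication by each element is bounded in the $L^2$-norm (because the corresponding operator acts boundedly in Hilbert-Schmidt norm via composition with a bounded operator on $L^2(\g)$), and the density requirement for a Hilbert algebra holds since $\S(\Xi)\#\S(\Xi)\subset\S(\Xi)$ is already dense in $L^2(\Xi)$.

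The only step that might look delicate is the Jacobian computation for the change of variables $(X,Y)\mapsto (X, X\bu[-Y])$, but the nilpotence of $\g$ together with the polynomial structure of $\bu$ makes it routine; I do not foresee a real obstacle beyond keeping the measure normalizations on $\g$ and $\g^\sharp$ consistent with those implicitly used in \eqref{garaci} and in the Plancherel identity for $\mathcal F$.
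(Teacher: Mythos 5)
Your proposal is correct and follows essentially the same route as the paper, which obtains the lemma by transporting the $H^*$-algebra structure of $\mathbb B^2\big[L^2(\g)\big]$ through the isomorphism ${\sf Op}$ (cf. Remark \ref{atreia} and the sentence preceding the lemma). You merely make explicit the details the paper leaves implicit, in particular the unitarity of ${\sf Op}$ via the kernel map \eqref{acrisor} and the measure-preserving change of variables $(X,Y)\mapsto(X,X\bu[-Y])$.
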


In particular, this means that for every $f,g,h\in L^2(\Xi)$ one has
\begin{equation}\label{kiudata}
\<f\#g,h\>_{(\Xi)}=\big\<f,h\#g^\#\big\>_{(\Xi)}=\big\<g,f^\#\#h\big\>_{(\Xi)}\,,\ \ \<f,g\>_{(\Xi)}=\big\<g^\#\!,f^\#\big\>_{(\Xi)}\,.
\end{equation}

This allows a series of extensions by duality. By capital letters we denote distributions. We are going to skip the easy justifications and refer to \cite{MP} for an abstract approach.

\smallskip
First one extends
\begin{equation*}\label{lanceput}
\S'(\Xi)\times\S(\Xi)\overset{\#}{\longrightarrow}\S'(\Xi)\,,\quad\<F\#g,h\>_{(\Xi)}:=\big\<F,h\#g^\#\big\>_{(\Xi)}\,,
\end{equation*}
\begin{equation*}\label{lasfarsit}
\S(\Xi)\times\S'(\Xi)\overset{\#}{\longrightarrow}\S'(\Xi)\,,\quad\<g\#F,h\>_{(\Xi)}:=\big\<F,g^\#\#h\big\>_{(\Xi)}\,.
\end{equation*}

\begin{Definition}\label{janski}
{\it The Moyal algebra} is 
\begin{equation*}\label{moyal} 
\mathcal M(\Xi):=\big\{F\in\S'(\Xi)\!\mid\!F\#\S(\Xi)\subset\S(\Xi)\,,\ \S(\Xi)\#F\subset\S(\Xi)\big\}\,.
\end{equation*}
\end{Definition}

It is clear that this is a unital $^*$-algebra with
\begin{equation*}\label{lamijloc}
\<F\#G,h\>_{(\Xi)}:=\big\<F,h\#G^\#\big\>_{(\Xi)}\quad{\rm and}\quad\<F^\#\!,h\>_{(\Xi)}:=\big\<h^\#\!,F\big\>_{(\Xi)}\,,\quad\forall\,h\in\S(\Xi)\,;
\end{equation*}
the unit is the constant function $1$\,. Actually, by construction, it is the largest $^*$-algebra in which $\S(\Xi)$ is an essential bi-sided self-adjoint ideal. Anyhow, one has 
\begin{equation*}\label{bilat}
\mathcal M(\Xi)\#\mathcal S(\Xi)\#\mathcal M(\Xi)\subset\mathcal S(\Xi)\,.
\end{equation*}
We do not intend to discuss its natural topological structure. There is also an obvious way to get extensions
\begin{equation}\label{lancapat}
\S'(\Xi)\times\mathcal M(\Xi)\overset{\#}{\longrightarrow}\S'(\Xi)\,,\quad\mathcal M(\Xi)\times\mathcal S'(\Xi)\overset{\#}{\longrightarrow}\S'(\Xi)\,.
\end{equation}

By inspecting the definitions one realizes that

\begin{Proposition}\label{enfinn}
The pseudo-differential calculus extends to an isomorphism ${\sf Op}:\mathcal M(\Xi)\to\mathbb L[\S(\g)]\cap\mathbb L[\S'(\g)]$\,.
\end{Proposition}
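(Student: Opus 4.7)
The backbone of the argument is the already-noted isomorphism ${\sf Op}:\S'(\Xi)\overset{\sim}{\longrightarrow}\mathbb L[\S(\g),\S'(\g)]$ of Remark \ref{atreia}, which makes the restriction of ${\sf Op}$ to $\mathcal M(\Xi)$ injective automatically and reduces the statement to checking two things: that $F\in\mathcal M(\Xi)$ is equivalent to ${\sf Op}(F)\in\mathbb L[\S(\g)]\cap\mathbb L[\S'(\g)]$, and that the $^*$-algebraic structure is preserved. The $^*$-algebra homomorphism property will then follow at once from Proposition \ref{algstruct} combined with the duality extensions of $\#$ and $^\#$.

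For the forward implication, fix $F\in\mathcal M(\Xi)$ and a normalized window $v\in\S(\g)$. By Proposition \ref{cateva}(iii) one has $u={\sf Op}(\mathcal E_{u,v})v$ for every $u\in\S(\g)$, and by Lemma \ref{startortog} the Fourier-Wigner transform $\mathcal E_{u,v}$ belongs to $\S(\Xi)$. The duality extensions \eqref{lancapat} of the law $\#$ are designed to match operator composition, whence
\begin{equation*}
{\sf Op}(F)u={\sf Op}(F){\sf Op}(\mathcal E_{u,v})v={\sf Op}(F\#\mathcal E_{u,v})v\,.
\end{equation*}
Since $F\in\mathcal M(\Xi)$, the symbol $F\#\mathcal E_{u,v}$ lies in $\S(\Xi)$, so ${\sf Op}(F\#\mathcal E_{u,v})\in\mathbb L[\S'(\g),\S(\g)]$ and its evaluation at $v\in\S(\g)\subset\S'(\g)$ is an element of $\S(\g)$. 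Thus ${\sf Op}(F)$ maps $\S(\g)$ into itself, and the closed graph theorem applied to the Fréchet space $\S(\g)$, together with the continuity of ${\sf Op}(F):\S(\g)\to\S'(\g)$ already at hand, upgrades this to ${\sf Op}(F)\in\mathbb L[\S(\g)]$. Since $\mathcal M(\Xi)$ is stable under $^\#$, the same argument applied to $F^\#$ gives ${\sf Op}(F^\#)\in\mathbb L[\S(\g)]$. Because ${\sf Op}(F^\#)={\sf Op}(F)^*$ as Hilbert-space adjoints, transposing through the duality between $\S(\g)$ and $\S'(\g)$ produces a continuous extension of ${\sf Op}(F)$ to $\S'(\g)$ that agrees on $\S(\g)$ with the original operator, hence ${\sf Op}(F)\in\mathbb L[\S'(\g)]$.

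For the reverse implication, suppose ${\sf Op}(F)$ is continuous on both $\S(\g)$ and $\S'(\g)$, and let $g\in\S(\Xi)$. Then ${\sf Op}(g)\in\mathbb L[\S'(\g),\S(\g)]$, so the composition ${\sf Op}(F){\sf Op}(g)$ factors as $\S'(\g)\to\S(\g)\to\S(\g)$ and lies in $\mathbb L[\S'(\g),\S(\g)]$, which ${\sf Op}$ identifies with $\S(\Xi)$; matching this composition with the duality definition of $\#$ forces $F\#g\in\S(\Xi)$. The symmetric computation ${\sf Op}(g){\sf Op}(F):\S'(\g)\to\S'(\g)\to\S(\g)$ gives $g\#F\in\S(\Xi)$, so $F\in\mathcal M(\Xi)$. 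The main obstacle throughout is purely bookkeeping: one must carefully verify that the successive extensions of $\#$ by duality listed in Section \ref{fifirin} all represent genuine operator composition at each level, because without that bridge the identity ${\sf Op}(F\#g)={\sf Op}(F){\sf Op}(g)$ cannot be invoked when $F$ and $g$ live in different pieces of the triple $\S(\Xi)\subset L^2(\Xi)\subset\S'(\Xi)$.
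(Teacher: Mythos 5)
Your argument is correct, and it supplies exactly the content the paper compresses into ``by inspecting the definitions'': injectivity from the isomorphism of Remark \ref{atreia}, the rank-one identity $u={\sf Op}(\mathcal E_{u,v})v$ of Proposition \ref{cateva}(iii) together with the duality extensions of $\#$ to get ${\sf Op}(F)\S(\g)\subset\S(\g)$, closed graph for continuity, transposition via $F^\#$ for the extension to $\S'(\g)$, and the converse factorization for surjectivity. The one point you rightly flag as the real content --- that each duality extension of $\#$ genuinely represents operator composition in $\mathbb L[\S'(\g),\S(\g)]$, $\mathbb L[\S(\g),\S'(\g)]$, etc. --- is precisely what the paper also defers (``we are going to skip the easy justifications''), so your proof is at the same level of rigor and follows the intended route.
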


Finally, let us introduce a symbol version of the $C^*$-algebra of all the bounded linear operators in $\H=L^2(\g)$\,, by pulling back structure through ${\sf Op}$\,.

\begin{Definition}\label{ru}
{\it The symbol $C^*$-algebra} is 
\begin{equation*}\label{ruj} 
\mathcal A(\Xi):=\{F\in\S'(\Xi)\!\mid\!{\sf Op}(F)\in\mathbb B(\H)\}\,,\quad\p\!F\!\p_{\mathcal A(\Xi)}\,:=\,\p\!{\sf Op}(F)\!\p_{\mathbb B(\H)}.
\end{equation*}
\end{Definition}

\begin{Remark}\label{sistand}
{\rm Obviously
\begin{equation*}\label{siostrand}
\S(\Xi)\subset L^2(\Xi)\subset\mathcal A(\Xi)\subset\S'(\Xi)\,,
\end{equation*}
and all the inclusions are strict. The exponential functions $\{\varepsilon_\Z\!\mid\!\Z\in\Xi\}$ and the constant functions are all in $[\mathcal A(\Xi)\cap\mathcal M(\Xi)]\!\setminus\!L^2(\Xi)$\,. There is no inclusion between $\mathcal A(\Xi)$ and $\mathcal M(\Xi)$\,.  Since by quantizing symbols only depending on $X\in\g$ one gets multiplication operators, it is clear that $L^\infty(\g)\subset\mathcal A(\Xi)$ and $C^\infty_{\rm pol}(\g)\subset\mathcal M(\Xi)$\,. Thinking of symbols only depending on  $\xi\in\g^\sharp$, yielding convolution operators by the inverse Fourier transform of the symbol, one gets other results. Since both $\mathcal A(\Xi)$ and $\mathcal M(\Xi)$ are $^*$-algebras, one can generate new examples by performing $\#$-products.  
}
\end{Remark}

\section{Phase-space shifts}\label{frikasse}

We introduce at the symbol level the analog of Definition \ref{niulaif}.

\begin{Definition}\label{intrinsick}
For every $\Y,\Z\in\Xi$ and $f\in\S'(\Xi)$ we set $\th_{\Y,\Z}(f):=\varepsilon_\Y\# f\#\varepsilon_\Z^\#$\,.
\end{Definition}

In fact, having in view the properties of the functions $\varepsilon_\Z$\,, we see that the mapping $\th_{\Y,\Z}$ acts in any of the spaces $\S(\Xi),L^2(\Xi),\A(\Xi),\mathcal M(\Xi),\S'(\Xi)$\,. We also refer to Remark \ref{besides}. 

\begin{Proposition}\label{tragconsec}
One has
\begin{equation*}\label{theta}
\Th_{\Y,\Z}[{\sf Op}(f)]={\sf Op}\big[\th_{\Y,\Z}(f)\big]\,.
\end{equation*}
The $C^*$-algebra $\A(\Xi)$ is invariant under all the mappings $\th_{\Y,\Z}$\,.
\end{Proposition}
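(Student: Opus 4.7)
The proof is essentially a one-line assembly of identities already established, so the plan is to make sure all of these fit together cleanly on the dual side.

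The key observation is that everything in sight has been quantized already: by Proposition \ref{cateva}(i), ${\sf E}(\Y)={\sf Op}(\varepsilon_\Y)$, and by Corollary \ref{conzecinta}, ${\sf E}(\Z)^*={\sf Op}(\varepsilon_\Z^\#)$. Therefore, at least formally,
\begin{equation*}
\Th_{\Y,\Z}[{\sf Op}(f)]={\sf E}(\Y){\sf Op}(f){\sf E}(\Z)^*={\sf Op}(\varepsilon_\Y){\sf Op}(f){\sf Op}(\varepsilon_\Z^\#)={\sf Op}\!\big(\varepsilon_\Y\#f\#\varepsilon_\Z^\#\big)={\sf Op}\!\big[\th_{\Y,\Z}(f)\big]\,,
\end{equation*}
since by definition $\#$ corresponds to composition of pseudo-differential operators under ${\sf Op}$.

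The only non-trivial point is that $f$ is an arbitrary element of $\S'(\Xi)$ (or, for the relevant application, of $\A(\Xi)$), in which case the inner product $\varepsilon_\Y\#f\#\varepsilon_\Z^\#$ is not covered by the original definition on $\S(\Xi)$ but by the duality extensions introduced just before Definition \ref{janski}. I would verify that $\varepsilon_\Y$ and $\varepsilon_\Z^\#$ belong to the Moyal algebra $\mathcal M(\Xi)$; this is the content of Remark \ref{sistand}, which explicitly places the exponential functions in $\mathcal M(\Xi)$. Consequently the triple product $\varepsilon_\Y\#f\#\varepsilon_\Z^\#$ is unambiguously defined via \eqref{lancapat}, and by Proposition \ref{enfinn} quantization of Moyal-algebra elements gives operators in $\mathbb L[\S(\g)]\cap\mathbb L[\S'(\g)]$, so composition of the three operators ${\sf Op}(\varepsilon_\Y), {\sf Op}(f), {\sf Op}(\varepsilon_\Z^\#)$ is well defined on $\S(\g)$ and on $\S'(\g)$. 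With these legalities in place, the chain of equalities above can be read either as an identity in $\mathbb L[\S(\g),\S'(\g)]$ or, when $f\in L^2(\Xi)$, pointwise via the Hilbert algebra structure of Lemma \ref{hilbalg}, and then extended by density and weak continuity to $f\in\S'(\Xi)$.

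For the second assertion, suppose $f\in\A(\Xi)$, so ${\sf Op}(f)\in\mathbb B(\H)$. Since ${\sf E}(\Y)$ and ${\sf E}(\Z)$ are unitary, $\Th_{\Y,\Z}[{\sf Op}(f)]={\sf E}(\Y){\sf Op}(f){\sf E}(\Z)^*$ is again bounded, with operator norm equal to $\|{\sf Op}(f)\|_{\mathbb B(\H)}=\p f\p_{\A(\Xi)}$. By part (i), this bounded operator equals ${\sf Op}[\th_{\Y,\Z}(f)]$, so $\th_{\Y,\Z}(f)\in\A(\Xi)$ with $\p\th_{\Y,\Z}(f)\p_{\A(\Xi)}=\p f\p_{\A(\Xi)}$; in fact $\th_{\Y,\Z}$ acts isometrically on $\A(\Xi)$.

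The main (minor) obstacle is purely one of bookkeeping: confirming that the products appearing in $\varepsilon_\Y\#f\#\varepsilon_\Z^\#$ are covered by one of the previously defined extensions of $\#$, rather than a literal application of the integral formula \eqref{roduct} which would not converge for general $f\in\S'(\Xi)$. Once it is observed that $\varepsilon_\Y,\varepsilon_\Z^\#\in\mathcal M(\Xi)$, everything reduces to transporting the operator identity ${\sf E}(\Y)A{\sf E}(\Z)^*$ through the isomorphism ${\sf Op}$.
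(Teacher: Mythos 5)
Your proposal is correct and follows essentially the same route as the paper: write ${\sf E}(\Y)={\sf Op}(\varepsilon_\Y)$ and ${\sf E}(\Z)^*={\sf Op}(\varepsilon_\Z^\#)$, use multiplicativity of ${\sf Op}$ with respect to $\#$, and deduce invariance of $\A(\Xi)$ from the invariance of $\mathbb B(\H)$ under left and right multiplication by the unitaries of the Weyl system. The extra care you take with the Moyal-algebra extensions of $\#$ for $f\in\S'(\Xi)$ is a welcome precision that the paper leaves implicit, but it does not change the argument.
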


\begin{proof}
We have
\begin{align}
\Th_{\Y,\Z}[{\sf Op}(f)]&={\sf E}(\Y){\sf Op}(f){\sf E}(\Z)^*\!={\sf Op}(\varepsilon_\Y){\sf Op}(f){\sf Op}(\varepsilon_\Z^\#)\\
&={\sf Op}\big(\varepsilon_\Y\# f\#\varepsilon_\Z^\#\big)={\sf Op}\big[\th_{\Y,\Z}(f)\big]\,.
\end{align}
Invariance follows from this, since at an operator level $\A(\Xi)$ is $\mathbb B(\H)$\,, which is left invariant by multiplying to the left and to the right with elements of the Weyl system.
\end{proof}

We are going to need below the following result:

\begin{Lemma}\label{bucluk}
For every $u,v\in L^2(\Xi)$ and $\Y,\Z\in\Xi$ one has
\begin{equation}\label{ciubuk}
\th_{\Y,\Z}\big(\mathcal E_{u,v}\big)=\mathcal E_{{\sf E}(\Y)u,{\sf E}(\Z)v}\,.
\end{equation}
\end{Lemma}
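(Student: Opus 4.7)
The plan is to reduce the identity to a statement about rank-one operators via the quantization ${\sf Op}$, exploiting the injectivity of ${\sf Op}$ on $\mathcal S'(\Xi)$ and the three key facts already established: (i) the intertwining property $\Th_{\Y,\Z}\circ{\sf Op}={\sf Op}\circ\th_{\Y,\Z}$ of Proposition \ref{tragconsec}; (ii) the rank-one identification ${\sf Op}(\mathcal E_{u,v})=\langle\cdot,v\rangle u$ of Proposition \ref{cateva}(iii); and (iii) the fact that ${\sf E}(\Y),{\sf E}(\Z)$ are unitary on $\H=L^2(\g)$.

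First I would apply ${\sf Op}$ to the left-hand side of \eqref{ciubuk}: by Proposition \ref{tragconsec} together with Proposition \ref{cateva}(iii),
\begin{equation*}
{\sf Op}\big[\th_{\Y,\Z}(\mathcal E_{u,v})\big]=\Th_{\Y,\Z}\big[{\sf Op}(\mathcal E_{u,v})\big]={\sf E}(\Y)\,\langle\cdot,v\rangle u\,{\sf E}(\Z)^*.
\end{equation*}
Next I would compute this composition of operators directly: for any test vector $w\in\S(\g)$,
\begin{equation*}
\big[{\sf E}(\Y)\langle\cdot,v\rangle u\,{\sf E}(\Z)^*\big]w=\big\<{\sf E}(\Z)^*w,v\big\>_{(\g)}\,{\sf E}(\Y)u=\big\<w,{\sf E}(\Z)v\big\>_{(\g)}\,{\sf E}(\Y)u,
\end{equation*}
where the last equality uses the unitarity of ${\sf E}(\Z)$. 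Hence the composition is the rank-one operator $\langle\cdot,{\sf E}(\Z)v\rangle\,{\sf E}(\Y)u$, which by Proposition \ref{cateva}(iii) equals ${\sf Op}\big(\mathcal E_{{\sf E}(\Y)u,{\sf E}(\Z)v}\big)$.

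Combining, we obtain ${\sf Op}\big[\th_{\Y,\Z}(\mathcal E_{u,v})\big]={\sf Op}\big[\mathcal E_{{\sf E}(\Y)u,{\sf E}(\Z)v}\big]$, and the injectivity of ${\sf Op}:\S'(\Xi)\to\mathbb L[\S(\g),\S'(\g)]$ (Remark \ref{atreia}) yields \eqref{ciubuk}. I do not anticipate a serious obstacle: the only subtlety is ensuring that all objects live in spaces where the above manipulations are legal. For $u,v\in L^2(\g)$ the Fourier--Wigner transform $\mathcal E_{u,v}$ lies in $L^2(\Xi)\subset\S'(\Xi)$, ${\sf Op}(\mathcal E_{u,v})$ is a Hilbert--Schmidt rank-one operator, and both ${\sf E}(\Y),{\sf E}(\Z)$ are bounded, so the composition and the appeal to ${\sf Op}$-injectivity are unproblematic. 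If one prefers, the identity can also be verified pointwise by a short substitution in Definition \ref{wigner} using Lemma \ref{calcul}, but the operator-theoretic route above is the cleanest.
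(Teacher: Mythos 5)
Your proposal is correct and follows exactly the paper's own argument: apply ${\sf Op}$, use the intertwining property of Proposition \ref{tragconsec} and the rank-one identification of Proposition \ref{cateva}(iii), compute ${\sf E}(\Y)\langle\cdot,v\rangle u\,{\sf E}(\Z)^*=\langle\cdot,{\sf E}(\Z)v\rangle{\sf E}(\Y)u$, and conclude by the injectivity of ${\sf Op}$. Your added remarks on where the objects live are a harmless (and welcome) elaboration of the same proof.
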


\begin{proof}
One can write
\begin{align}
{\sf Op}\big[\th_{\Y,\Z}\big(\mathcal E_{u,v}\big)\big]&=\Th_{\Y,\Z}\big[{\sf Op}\big(\mathcal E_{u,v}\big)\big]={\sf E}(\Y)\big[\<\cdot,v\>u\big]{\sf E}(\Z)^*\\
&=\<\cdot,{\sf E}(\Z)v\>{\sf E}(\Y)u={\sf Op}\big(\mathcal E_{{\sf E}(\Y)u,{\sf E}(\Z)v}\big)\,,
\end{align}
which implies \eqref{ciubuk}.
\end{proof}

\begin{Remark}\label{cazzul}
The explicit form of $\th_{\Y,\Z}(f)$ is less important than the way it has been constructed, and will not be used here. However, for convenience, we are going to record the diagonal case $\th_{\Z,\Z}\equiv\th_\Z$ (forming a family of automorphisms).  One of the reasons is that it leads to the covariant symbol of the operators ${\sf Op}(f)$\,; see \cite{M}. By a direct computation one gets
\begin{equation}\label{heta}
\big[\th_{(Z,\zeta)}(f)\big](X,\xi)=\!\int_\g\!\int_{\g^\sharp}\! e^{i\<X\bu[-Y]\mid\eta-\xi\>}e^{i\<X-Y\mid\zeta\>}f\Big([-Z]\bu X,\widetilde{\sf Ad}^\sharp_{-Z}(\eta)\Big)dYd\eta
\end{equation}
in terms of the coadjoint action \eqref{faras}. If $\G=\R^n$ the coadjoint action is trivial and $X\bu[-Y]=X-Y$\,, so one gets 
\begin{equation*}\label{bigineq}
\big[\th_{(Z,\zeta)}(f)\big](X,\xi)=f(X-Z,\xi-\zeta)\,,
\end{equation*}
implying that the phase-space translations of the symbols are implemented, at the level of the quantization, by conjugations with the Weyl system.
\end{Remark}

\begin{Remark}\label{buluc}
One can use the automorphism family $\big\{\th_\Z\!\mid\!\Z\in\Xi\big\}$ to define a sophisticated form of convolution, that we intend to use in a future publication. For suitable functions $\varphi,f$ on $\Xi$ we write
\begin{equation}\label{clubsuite}
\varphi\!\ddagger_\theta\! f=\int_\Xi\!\varphi(\Z)\theta_\Z(f)\,d\Z=\int_\Xi \varphi(\Z)\,\varepsilon_\Z\#f\#\varepsilon_\Z^\#\,d\Z\,.
\end{equation}
For $\G=\R^n$ this boils down to the usual additive convolution, since $\th_\Z$ reduces to a translation.
One gets easily an explicit formula:
\begin{equation*}\label{sprelumina}
\big(\varphi\!\ddagger_\theta\! f\big)(\X)=\int_\Xi \phi_\X(\Z)f(\Z)d\Z\,,
\end{equation*}
where
\begin{align}\label{maivrei}
\phi_{(X,\xi)}(Z,\zeta):=\int_\g\!\int_{\g^\sharp} &e^{-i\<X\bu[-N]\bu Z\bu[-X]\mid\xi\>}e^{i\<X-X\bu[-Z]\bu N\mid\mu\>}\\
&e^{i\<Z\bu[-N]\mid\zeta\>}\varphi(X\bu[-Z],\mu)dNd\mu\,.
\end{align}
It is easy to verify that, for $\G=\R^n$, one gets $\phi_{(X,\xi)}(Z,\zeta)=\varphi(X-Z,\xi-\zeta)$\,.

\smallskip
As with the usual convolution, setting
\begin{equation*}\label{frampton}
\varphi_t(\Z):=t^{-2n}\varphi\big(t^{-1}\Z\big)\,,\quad t>0\,,\ \Z\in\Xi\,,\ \varphi\in\S(\Xi).
\end{equation*}
one gets $\varphi_t\!\ddagger_\th\!f\underset{t\to 0}{\longrightarrow} f$ pointwise if $f$ is bounded and continuous.
\end{Remark}

\section{Coorbit spaces - a short overview}\label{frikamie}

Let us pick a normalized "window" (or "atom") $w$ belonging to the Fr\'echet space $\S(\g)\hookrightarrow L^2(\g)$\,.
In terms of the Fourier-Wigner transform, the linear mapping 
\begin{equation*}\label{balaur}
\mathcal E_{w}:\S'(\g)\rightarrow\S'(\g\times\g^\sharp)\,,\quad\mathcal E_{w}(u):=\mathcal E_{u,w}
\end{equation*} 
will be used to pull back algebraic and topological structures. It is isometric from $L^2(\g)$ to $L^2(\g\times\g^\sharp)$ and this has standard consequences (inversion and reproduction formulae). Let us record the explicit form of its adjoint
\begin{equation}\label{adhuntul}
\mathcal E_{w}^\dag(h)=\int_\Xi h(\X){\sf E}(\X)^*w\,d\X.
\end{equation}

\begin{Definition}\label{carcaluc}
Let $(\mathcal B,\p\cdot\p_\mathcal B)$ be a normed space continuously embedded in $\S'(\g\times\g^\sharp)$\,. Its {\it coorbit space (associated to the window $w$)} is
\begin{equation}\label{paianjenel}
{\sf co}_{w}(\mathcal B):=\big\{u\in\S'(\g)\mid \mathcal E_{w}(u)\in\mathcal B\big\}
\end{equation}
with the norm $\p\!u\!\p_{{\sf co}_{w}(\mathcal B)}\,:=\,\p\!\mathcal E_{w}(u)\!\p_\mathcal B$\,. 
\end{Definition}

If $\mathcal B$ is just a vector subspace, we still use \eqref{paianjenel} to define a subspace of $\S'(\g)$\,. The case of a locally convex space $\mathcal B$ is also important. 

\begin{Remark}\label{sarpe}
Developing the abstact part of the theory of the spaces ${\sf co}_{w}(\mathcal B)$ is quite standard, relying on the good properties of the isometry $\mathcal E_{w}$\,, and it will not be done here. Let us just state that if $\mathcal B\hookrightarrow\S'(\g\times\g^\sharp)$ is Banach, then ${\sf co}_{w}(\mathcal B)$ is a Banach space continuously embedded in $\S'(\g)$\,. Simple arguments based on the inversion formula and the mapping properties of $\mathcal E_{w}^\dag$ show that 
$$
{\sf co}_{w}\big[L^2(\g\times\g^\sharp)\big]=\H\,,\quad{\sf co}_{w}\big[\S(\g\times\g^\sharp)\big]=\S(\g)\,,\quad{\sf co}_{w}\big[\S'(\g\times\g^\sharp)\big]=\S'(\g)\,.
$$
\end{Remark}

Weighted mixed $L^{pq}$-spaces of functions defined on $\g\times\g^\sharp$ are nice examples of spaces $\mathcal B$ to start with. For the case of the Heisenberg group, a comparison with \cite{FRR} would be interesting.

\smallskip
To define coorbit spaces of functions on $\Xi=\g\times\g^\sharp$, that could play the role of symbols of the ${\sf Op}$-calculus, one needs a good map (at least an isometry) transforming functions/distributions on $\Xi$ into functions/distributions on $\Xi\times\Xi$\,. One solution is to proceed by analogy, defining in $\Xi\times\Xi\cong(\g\times\g)\times\big(\g^\sharp\times\g^\sharp\big)$ a Weyl system and a Fourier-Wigner transform, doubling the number of variables. We did not check, but a starting point could be the Weyl system
$$
\big[\mathbf E\big((Y,\eta),(Z,\zeta)\big)h\big](X,\xi)):=e^{i\<Z,\xi\>}e^{-i\<X,\zeta\>}h\big([-Y]\bu X,\xi-\eta\big)\,,
$$
attaching unitary operators in $L^2(\Xi)$ to points in $\Xi\times\Xi$\,. The non-commutative group structure of $\Xi=\g\times\g^\sharp$ has been taken into account. 

\smallskip
It seems that nobody has done this for the case of a nilpotent Lie algebra $\g$\,, but is is likely that this can be done, and connecting the coorbit spaces on $\g$ and on $\Xi$\,, respectively, via the pseudo-differential calculus, would be successful. We will sketch a different approach, relying on the previously defined phase-space shifts and having some connections with ideas from \cite{M0}.

\begin{Definition}\label{minte}
Let $h\in\S(\Xi)\setminus\{0\}$ (most often $\p\!h\!\p_{(\Xi)}\,=1$)\,. One defines for $f\in\S'(\Xi)$ and $\Y,\Z\in\Xi$
\begin{equation}\label{disperare}
\big[\mathfrak E_h(f)\big](\Y,\Z):=\big\<\th_{\Y,\Z}(f),h\big\>_{\!(\Xi)}=\big\<\varepsilon_\Y\#f\#\varepsilon_\Z^\#,h\big\>_{\!(\Xi)}\,.
\end{equation}
\end{Definition}

Note that, by \eqref{lancapat}, one has 
$$
\varepsilon_\Y\#f\#\varepsilon_\Z^\#\in\mathcal M(\Xi)\#\S'(\Xi)\#\mathcal M(\Xi)\subset\S'(\Xi)\,,
$$ 
so by our choice $h\in\S(\Xi)$ the expression \eqref{disperare} makes sense. If $f\in L^2(\Xi)$\,, one has 
$$
\varepsilon_\Y\#f\#\varepsilon_\Z^\#\!\in\A(\Xi)\# L^2(\Xi)\#\A(\Xi)\subset L^2(\Xi)
$$ 
($L^2(\Xi)$ is an ideal in $\A(\Xi)$\,, since it corresponds to Hilbert-Schmidt operators) and an $L^2$-window is directly available. Other situations can be accommodated. 

\begin{Proposition}\label{lafin}
For every $h,f\in L^2(\Xi)$ one has 
\begin{equation}\label{finfin}
\big\Vert\,\mathfrak E_h(f)\,\big\Vert_{(\Xi\times\Xi)}=\,\p\!h\!\p_{(\Xi)}\,\p\!f\!\p_{(\Xi)}.
\end{equation}
\end{Proposition}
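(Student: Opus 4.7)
The strategy is to diagonalise using the Fourier--Wigner isomorphism of Lemma~\ref{startortog}, transport the phase-space shift $\th_{\Y,\Z}$ through the diagonalisation via Lemma~\ref{bucluk}, and then reduce the double integral to two independent applications of the orthogonality relations~\eqref{orthog}, one in each variable.

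Concretely, Lemma~\ref{startortog} furnishes Schmidt decompositions $f=\sum_k\sigma_k\mathcal E_{u_k,v_k}$ and $h=\sum_l\tau_l\mathcal E_{p_l,q_l}$ with orthonormal systems $\{u_k\},\{v_k\},\{p_l\},\{q_l\}\subset L^2(\g)$ and $\sum_k\sigma_k^2=\|f\|_{(\Xi)}^2$, $\sum_l\tau_l^2=\|h\|_{(\Xi)}^2$. Since $\th_{\Y,\Z}$ is continuous on $L^2(\Xi)$, Lemma~\ref{bucluk} gives $\th_{\Y,\Z}(f)=\sum_k\sigma_k\,\mathcal E_{{\sf E}(\Y)u_k,\,{\sf E}(\Z)v_k}$; pairing this expansion with $h$ and recognising the resulting matrix elements as Fourier--Wigner coefficients (Definition~\ref{wigner}) via~\eqref{orthog} yields
\begin{equation*}
\big[\mathfrak E_h(f)\big](\Y,\Z)\,=\,\sum_{k,l}\sigma_k\tau_l\,\mathcal E_{u_k,p_l}(\Y)\,\overline{\mathcal E_{v_k,q_l}(\Z)}.
\end{equation*}

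Squaring this expression and integrating over $(\Y,\Z)\in\Xi\times\Xi$, the orthogonality relations~\eqref{orthog} applied in each variable separately force all off-diagonal $(k,k',l,l')$ contributions to vanish (because the Schmidt vectors are orthonormal), leaving $\sum_{k,l}\sigma_k^2\tau_l^2=\|f\|_{(\Xi)}^2\|h\|_{(\Xi)}^2$, which is exactly~\eqref{finfin}.

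The only nontrivial step is the interchange of summation and integration after squaring. I would handle it by first establishing the identity for finite-rank truncations of the Schmidt expansions, where everything is a finite sum with no convergence issues, and then passing to the limit using that $\th_{\Y,\Z}$ is a contraction on $L^2(\Xi)$ (through ${\sf Op}$ it is conjugation by the unitaries ${\sf E}(\Y),{\sf E}(\Z)$ on $\mathbb B^2[L^2(\g)]$, cf.\ Remark~\ref{atreia}, so the partial-sum remainders are controlled uniformly in $(\Y,\Z)$ by the tails $\sum_{k>N}\sigma_k^2$ and $\sum_{l>N}\tau_l^2$). This is the main mild obstacle; everything else is routine.
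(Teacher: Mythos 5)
Your proof is essentially the paper's: both rest on the single computation $\mathfrak E_{\mathcal E_{u,v}}(\mathcal E_{u',v'})=\mathcal E_{u',u}\otimes\overline{\mathcal E_{v',v}}$, obtained by combining Lemma~\ref{bucluk} with the orthogonality relations \eqref{orthog}, followed by a density argument --- you package the density step via Schmidt decompositions of $f$ and $h$, while the paper uses totality of the family $\{\mathcal E_{u,v}\}$ in $L^2(\Xi)$ together with polarization ("work with scalar products, then take diagonal values"). The one step to tighten is your passage to the limit: a bound on $\mathfrak E_h(f)-\mathfrak E_{h_N}(f_N)$ that is uniform in $(\Y,\Z)$ gives only uniform pointwise convergence, which does not imply convergence in $L^2(\Xi\times\Xi)$ because $\Xi\times\Xi$ has infinite measure; the clean fix is to apply the finite-rank identity (in polarized form) to the differences $f_N-f_M$ and $h_N-h_M$, concluding that $\mathfrak E_{h_N}(f_N)$ is Cauchy in $L^2(\Xi\times\Xi)$, and then to identify its $L^2$-limit with $\mathfrak E_h(f)$ through the pointwise convergence you already have.
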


\begin{proof}
The family $\big\{\mathcal E_{u,v}\!\mid\! u,v\in L^2(\G)\big\}$ is total in $L^2(\Xi)$\,, since by the ${\sf Op}$-quantization it yields all the rank one operators (see Proposition \ref{cateva},\,(iii)), forming a total family in $\mathbb B^2\big[L^2(\g)\big]$\,. One has
\begin{align}
\big[\mathfrak E_{\mathcal E_{u,v}}\!(\mathcal E_{u',v'})\big](\X,\Y)&=\big\<\varepsilon_\X\#\mathcal E_{u',v'}\#\varepsilon_\Y^\#,\mathcal E_{u,v}\big\>_{(\Xi)}\\
&\overset{\eqref{ciubuk}}{=}\big\<\mathcal E_{{\sf E}(\X)u',{\sf E}(\Y)v'},\mathcal E_{u,v}\big\>_{(\Xi)}\\
&\overset{\eqref{orthog}}{=}\<{\sf E}(\X)u',u\>_{(\g)}\<v,{\sf E}(\Y)v'\>_{(\g)}\\
&=\mathcal E_{u',u}(\X)\overline{\mathcal E_{v',v}(\Y)}\,,
\end{align}
meaning that $\mathfrak E_{\mathcal E_{u,v}}\!(\mathcal E_{u',v'})=\mathcal E_{u',u}\otimes\overline{\mathcal E_{v',v}}$\,. Then using once again the orthogonal relations \eqref{orthog} leads easily to the result (work with scalar products and then take diagonal values to get \eqref{finfin}).
\end{proof}

\begin{Definition}\label{garcaluc}
Let $(\mathfrak B,\p\cdot\p_\mathfrak B)$ be a normed space continuously embedded in $\S'(\Xi\times\Xi)$\,. Its {\it coorbit space associated to the normalized window $h\in\S(\Xi)$} is
\begin{equation}\label{baianjenel}
{\sf Co}_{h}(\mathfrak B):=\big\{f\in\S'(\Xi)\mid \mathfrak E_{h}(f)\in\mathfrak B\big\}
\end{equation}
with the norm $\p\!u\!\p_{{\sf Co}_{h}(\mathfrak B)}\,:=\,\p\!\mathfrak E_{h}(f)\!\p_\mathfrak B$\,. If $\mathfrak B$ is just a subspace, we still define the vector space \eqref{baianjenel}, but without norm.
\end{Definition}

Recall the algebraic rules of (suitable) integral kernels $K,L:\Xi\times\Xi\to\mathbb C$\,:
\begin{equation*}\label{integrale}
(K\circ L)(\Y,\Z):=\int_\Xi K(\Y,\X)L(\X,\Z)d\X\,,\quad K^\circ(\Y,\Z):=\overline{K(\Z,\Y)}\,.
\end{equation*}
Of course, they are ment to emulate the multiplication and the adjoint of integral operators.

\begin{Proposition}\label{fulminant}
If $\,h,k\in\S(\Xi)$\,, then for every $f,g\in L^2(\Xi)$ we have
\begin{equation*}\label{superalgebric}
\mathfrak E_{h\#k}(f\#g)=\mathfrak E_h(f)\circ\mathfrak E_k(g)\quad{\rm and}\quad\mathfrak E_{h^\#}\big(f^\#\big)=\mathfrak E_h(f)^\circ\,.
\end{equation*}
\end{Proposition}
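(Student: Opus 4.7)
The adjoint identity is purely algebraic. Starting from
\[
\mathfrak E_{h^\#}(f^\#)(\Y,\Z) = \big\langle \varepsilon_\Y \# f^\# \# \varepsilon_\Z^\#,\, h^\# \big\rangle_{(\Xi)},
\]
I would apply the involutive rule $\langle a,b\rangle_{(\Xi)} = \langle b^\#,a^\#\rangle_{(\Xi)}$ from \eqref{kiudata}, combined with $(a\#b\#c)^\# = c^\# \# b^\# \# a^\#$ and the involution $(\varepsilon_\Z^\#)^\# = \varepsilon_\Z$ (implicit in Corollary \ref{conzecinta}). This rewrites the expression as $\langle h, \varepsilon_\Z \# f \# \varepsilon_\Y^\# \rangle_{(\Xi)} = \overline{\mathfrak E_h(f)(\Z,\Y)}$, which is by definition $\mathfrak E_h(f)^\circ(\Y,\Z)$. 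No further analysis is needed.

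For the composition identity, the plan is density plus orthogonality. By Proposition \ref{cateva}(iii), ${\sf Op}(\mathcal E_{u,v}) = \langle\cdot,v\rangle u$, so on rank-one Fourier--Wigner symbols the $\#$-product reads
\[
\mathcal E_{u_1,v_1} \# \mathcal E_{u_2,v_2} = \langle u_2,v_1\rangle_{(\g)}\, \mathcal E_{u_1,v_2}.
\]
Finite linear combinations of such symbols are dense in $L^2(\Xi)$ (totality of rank-ones in $\mathbb B^2[L^2(\g)]$ transported through the isometry ${\sf Op}$), so it suffices to verify the identity for $f=\mathcal E_{u_1,v_1}$, $g=\mathcal E_{u_2,v_2}$, $h=\mathcal E_{a_1,b_1}$, $k=\mathcal E_{a_2,b_2}$.

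The central input is the formula
\[
\mathfrak E_{\mathcal E_{a,b}}(\mathcal E_{u,v}) = \mathcal E_{u,a}\otimes\overline{\mathcal E_{v,b}},
\]
already obtained inside the proof of Proposition \ref{lafin} from Lemma \ref{bucluk} and the orthogonality relations \eqref{orthog}. After substitution, the left-hand side $\mathfrak E_{h\#k}(f\#g)(\Y,\Z)$ becomes
$\langle u_2,v_1\rangle \langle b_1,a_2\rangle\,\mathcal E_{u_1,a_1}(\Y)\overline{\mathcal E_{v_2,b_2}(\Z)}$,
while the right-hand side factors as
$\mathcal E_{u_1,a_1}(\Y)\overline{\mathcal E_{v_2,b_2}(\Z)} \cdot \int_\Xi \overline{\mathcal E_{v_1,b_1}(\X)} \mathcal E_{u_2,a_2}(\X)\,d\X$.
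The remaining integral equals $\langle \mathcal E_{u_2,a_2}, \mathcal E_{v_1,b_1}\rangle_{(\Xi)} = \langle u_2,v_1\rangle\langle b_1,a_2\rangle$ by \eqref{orthog}, matching the left-hand side exactly.

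The main obstacle I anticipate is not any single line but justifying the extension from the rank-one dense class to arbitrary $f,g\in L^2(\Xi)$ with $h,k\in\S(\Xi)$. Continuity of the left-hand side is immediate (sub-multiplicativity of $\#$ on Hilbert--Schmidt symbols, plus Proposition \ref{lafin}), but the composition $\circ$ is not defined on arbitrary pairs of $L^2(\Xi\times\Xi)$ kernels. The cleanest route is to first establish the identity on the dense $\S(\Xi)^4$, where Fubini and absolute convergence hold unrestrictedly, and then pass to the stated class by continuity; Schwartz regularity of the windows $h,k$ provides the slicewise $L^2$-control of $\mathfrak E_h(f)$ and $\mathfrak E_k(g)$ that is needed for the pointwise integral defining $\circ$ to be well-defined and jointly continuous in the $L^2$-entries.
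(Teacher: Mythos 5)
Your proof is correct; the adjoint identity is handled exactly as in the paper, but for the composition identity you take a genuinely different route. The paper argues directly: using \eqref{kiudata} it rewrites $\mathfrak E_h(f)(\Y,\X)=\big\langle h^\#\#\varepsilon_\Y\#f,\varepsilon_\X\big\rangle_{(\Xi)}$ and $\mathfrak E_k(g)(\X,\Z)=\big\langle \varepsilon_\X,k\#\varepsilon_\Z\#g^\#\big\rangle_{(\Xi)}$, collapses the $\X$-integral with the Plancherel-type relation \eqref{vormula}, and then shuffles factors back to $\big\langle \varepsilon_\Y\#f\#g\#\varepsilon_\Z^\#,h\#k\big\rangle_{(\Xi)}$ --- valid at once for all $f,g\in L^2(\Xi)$ and $h,k\in\S(\Xi)$, with no density step. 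You instead reduce to rank-one Fourier--Wigner symbols via $\mathfrak E_{\mathcal E_{a,b}}(\mathcal E_{u,v})=\mathcal E_{u,a}\otimes\overline{\mathcal E_{v,b}}$ and the orthogonality relations \eqref{orthog}; your rank-one computation checks out, including the antilinearity of $\mathfrak E$ in the window, which correctly produces $\langle b_1,a_2\rangle$ rather than $\langle a_2,b_1\rangle$. The density step you flag as the main obstacle does close: both sides are pointwise continuous in each of $f,g,h,k$ with respect to the $L^2(\Xi)$ norm, since $\X\mapsto\mathfrak E_h(f)(\Y,\X)$ is the total Fourier transform of $h^\#\#\varepsilon_\Y\#f$ and so has $L^2_\X$-norm at most $\Vert h\Vert_{(\Xi)}\Vert f\Vert_{(\Xi)}$ (similarly for the other slice), making the $\circ$-integral absolutely convergent by Cauchy--Schwarz and continuous in all entries. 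What the paper's argument buys is economy --- identity \eqref{vormula} does all the analytic work in one line; what yours buys is transparency --- it exhibits $\mathfrak E$ as intertwining $\#$ with honest composition of rank-one kernels and recycles the computation already made in the proof of Proposition \ref{lafin}.
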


\begin{proof}
One uses the rules \eqref{kiudata} of a Hilbert algebra and the relation \eqref{vormula}, getting
\begin{align}
\big[\mathfrak E_h(f)\circ\mathfrak E_k(g)\big](\Y,\Z)&=\int_\Xi \mathfrak E_h(f)(\Y,\X)\,\mathfrak E_k(g)(\X,\Z)\,d\X\\
&=\int_\Xi \big\<\varepsilon_\Y\#f\#\varepsilon_\X^\#,h\big\>_{\!(\Xi)}\big\<\varepsilon_\X\#g\#\varepsilon_\Z^\#,k\big\>_{\!(\Xi)}\,d\X\\
&=\int_\Xi \big\<h^\#\#\varepsilon_\Y\#f,\varepsilon_\X\big\>_{\!(\Xi)}\big\<\varepsilon_\X,k\#\varepsilon_\Z\#g^\#\big\>_{\!(\Xi)}\,d\X\\
&=\big\<h^\#\#\varepsilon_\Y\#f,k\#\varepsilon_\Z\#g^\#\big\>_{\!(\Xi)}=\big\<h\#h\#\varepsilon_\Y\#f\#g,\varepsilon_\Z\big\>_{\!(\Xi)}\\
&=\big\<\varepsilon_\Y\#f\#g\#\varepsilon_\Z^\#,h\#k\big\>_{\!(\Xi)}=\mathfrak E_{h\#k}(f\#g)(\Y,\Z)\,.
\end{align}
and
\begin{align}
\mathfrak E_h(f)^\circ(\Y,\Z)&=\overline{\mathfrak E_h(f)(\Z,\Y)}=\overline{\big\<\varepsilon_\Z\#f\#\varepsilon_\Y^\#,h\big\>_{\!(\Xi)}}\\
&=\big\<h,\varepsilon_\Z\#f\#\varepsilon_\Y^\#\big\>_{\!(\Xi)}=\big\<f^\#\#\varepsilon_\Z^\#,\varepsilon_\Y^\#\#h^\#\big\>_{\!(\Xi)}\\
&=\big\<\varepsilon_\Y\#f^\#\#\varepsilon_\Z^\#,h^\#\big\>_{\!(\Xi)}=\mathfrak E_{h^\#}\big(f^\#\big)(\Y,\Z)\,.
\end{align}
\end{proof}

\begin{Corollary}\label{infekt}
If the vector space $\mathfrak B$ is an involutive algebra with respect to $\big(\circ,^\circ\big)$ and $h=h\#h=h^\#$, then ${\sf Co}_{h}(\mathfrak B)$ is an involutive algebra with respect to $\big(\#,^\#\big)$\,. 
\end{Corollary}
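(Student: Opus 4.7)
The plan is to read off the corollary directly from Proposition \ref{fulminant} by exploiting the two hypotheses $h = h\#h$ and $h = h^{\#}$ on the window. Concretely, fix $f,g \in {\sf Co}_h(\mathfrak B)$, so that by definition $\mathfrak E_h(f), \mathfrak E_h(g) \in \mathfrak B$; I will verify that both $f\#g$ and $f^{\#}$ satisfy the membership condition in \eqref{baianjenel}.

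For the product, I would apply Proposition \ref{fulminant} with $k := h$. Using $h = h\#h$ first and then the multiplicativity identity gives
\begin{equation*}
\mathfrak E_h(f\#g) \;=\; \mathfrak E_{h\#h}(f\#g) \;=\; \mathfrak E_h(f)\circ\mathfrak E_h(g).
\end{equation*}
Because $\mathfrak B$ is assumed to be an algebra under $\circ$, the right-hand side lies in $\mathfrak B$, so $f\#g \in {\sf Co}_h(\mathfrak B)$. For the involution, I would use $h = h^{\#}$ together with the second identity in Proposition \ref{fulminant}:
\begin{equation*}
\mathfrak E_h\bigl(f^{\#}\bigr) \;=\; \mathfrak E_{h^{\#}}\bigl(f^{\#}\bigr) \;=\; \mathfrak E_h(f)^{\circ},
\end{equation*}
which belongs to $\mathfrak B$ by its closure under the kernel adjoint $^{\circ}$. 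Hence $f^{\#}\in {\sf Co}_h(\mathfrak B)$, completing the verification that $\bigl({\sf Co}_h(\mathfrak B),\#,^{\#}\bigr)$ is an involutive algebra.

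The only step that is not purely formal is invoking the identities of Proposition \ref{fulminant} at this level of generality: the proposition is stated for $f,g \in L^2(\Xi)$, whereas elements of ${\sf Co}_h(\mathfrak B)$ live a priori only in $\S'(\Xi)$, and the products $f\#g$ and $f^{\#}$ must first make sense. This is the main obstacle, and I would handle it exactly as the paper handles the definition of $\mathfrak E_h(f)$ itself: since $h,h\#h,h^{\#}\in\S(\Xi)$, the duality extensions \eqref{lancapat} of $\#$ and of $^{\#}$ show that $\varepsilon_\Y\# f\#g\#\varepsilon_\Z^{\#}$ and $\varepsilon_\Y\#f^{\#}\#\varepsilon_\Z^{\#}$ are well-defined tempered distributions, and the Hilbert-algebra manipulations performed in the proof of Proposition \ref{fulminant} transfer verbatim to this dual pairing. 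Once this extension is granted, the two displays above furnish the corollary.
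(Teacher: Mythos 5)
Your argument is exactly the paper's: apply Proposition \ref{fulminant} with $k=h$ and use $h=h\#h$ for the product, then $h=h^{\#}$ for the involution. Your closing remark correctly flags a subtlety the paper passes over in silence — Proposition \ref{fulminant} is stated only for $f,g\in L^2(\Xi)$, and for general elements of ${\sf Co}_h(\mathfrak B)\subset\S'(\Xi)$ one must first make sense of $f\#g$ and $f^{\#}$ via the duality extensions — so your version is, if anything, slightly more careful than the original.
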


\begin{proof}
Suppose that $f,g\in{\sf Co}_{h}(\mathfrak B)$\,, which means that $\mathfrak E_h(f),\mathfrak E_h(g)\in\mathfrak B$\,. Then
\begin{equation*}\label{efort}
\mathfrak E_h(f\#g)=\mathfrak E_{h\#h}(f\#g)=\mathfrak E_h(f)\circ\mathfrak E_h(g)\in\mathfrak B\,,
\end{equation*}
implying that $f\# g\in{\sf Co}_{h}(\mathfrak B)$\,. Invariance under the involution $^\#$ is checked similarly, using the self-adjointness of the window $h$\,.
\end{proof}

\begin{Remark}\label{c+norm}
In the framework of the Corollary, if $\p\!\cdot\p_\mathfrak B$ is a $C^*$-norm on $\big(\mathfrak B,\circ,^\circ\big)$\,, then $\p\!\cdot\p_{{\sf Co}_h(\mathcal B)}$ is a $C^*$-norm on $\big({\sf Co}_h(\mathcal B),\#,^\#\big)$:
$$
\p\!g^\#\#g\!\p_{{\sf Co}_h(\mathcal B)}\,=\big\Vert\,\mathfrak E_h(g^\#\#g)\,\big\Vert_{\mathfrak B}=\big\Vert\,\mathfrak E_h(g)^\circ\!\circ\mathfrak E_h(g)\,\big\Vert_{\mathfrak B}=\big\Vert\,\mathfrak E_h(g)\,\big\Vert_{\mathfrak B}^2=\,\p\!g\!\p_{{\sf Co}_h(\mathcal B)}^2.
$$
\end{Remark}

We describe now an abstract situation in which pseudo-differential operators with symbols in a coorbit space (of symbols) are well-defined and bounded between two coorbit spaces of vectors. Note that this requires a correlation of the windows; the Wigner transform $\mathcal W$ has been introduced in \eqref{napasttaka}.

\begin{Theorem}\label{margisnitele}
Let $w_1,w_2\in\S(\g)$ with $\p\!w_1\!\p_{(\g)}\,=\,\p\!w_2\!\p_{(\g)}\,=\!1$\,. Let $\p\!\cdot\!\p_{\mathcal B_1}$ and $\p\!\cdot\!\p_{\mathcal B_2}$ two norms on $\S(\Xi)$ 
and  $\p\!\cdot\!\p_{\mathfrak B}$ a norm on $\S(\Xi\times\Xi)$\,. 

Suppose that for every $\Psi\in\S(\Xi\times\Xi)$ the integral operator ${\sf Int}(\Psi)$ is bounded from $\big(\S(\Xi),\p\!\cdot\!\p_{\mathcal B_1}\!\!\big)$ to $\big(\S(\Xi),\p\!\cdot\!\p_{\mathcal B_2}\!\!\big)$\,, with operatorial norm less or equal than $C\!\p\!\Psi\!\p_{\mathfrak B}\,$ for some positive absolute constant $C$. 

Then, for every $f\in\S(\Xi)$\,, the pseudo-differential operator $\,{\sf Op}(f)$ is bounded from $\big(\S(\g),\p\!\cdot\!\p_{{\sf co}_{w_1}\!(\mathcal B_1)}\!\!\big)$ to $\big(\S(\g),\p\!\cdot\!\p_{{\sf co}_{w_2}\!(\mathcal B_2)}\!\!\big)$\,, with operatorial norm less or equal than $C\!\p\!f\!\p_{\,{\sf Co}_{\mathcal W_{w_1,w_2}}\!(\mathfrak B)}$\,.
\end{Theorem}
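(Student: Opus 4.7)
The strategy is to establish an intertwining formula of the type alluded to in the introduction (formula \eqref{formmula}): for $f\in\S(\Xi)$ and $u\in\S(\g)$,
\begin{equation*}
\mathcal E_{w_2}\!\big({\sf Op}(f)u\big)=\,{\sf Int}\!\big[\mathfrak E_{\mathcal W_{w_1,w_2}}(f)\big]\!\big[\mathcal E_{w_1}(u)\big],
\end{equation*}
where ${\sf Int}(\Psi)\varphi(\Y):=\int_\Xi\Psi(\Y,\Z)\varphi(\Z)d\Z$\,. Once this is in hand, the proof is a direct two-line chain of inequalities. Indeed, by Definition \ref{garcaluc} we have $\|{\sf Op}(f)u\|_{{\sf co}_{w_2}(\mathcal B_2)}=\,\|\mathcal E_{w_2}\!({\sf Op}(f)u)\|_{\mathcal B_2}$, which by the intertwining formula equals $\|{\sf Int}[\mathfrak E_{\mathcal W_{w_1,w_2}}(f)]\,\mathcal E_{w_1}(u)\|_{\mathcal B_2}$; applying the standing hypothesis on ${\sf Int}$ bounds this by $C\|\mathfrak E_{\mathcal W_{w_1,w_2}}(f)\|_{\mathfrak B}\|\mathcal E_{w_1}(u)\|_{\mathcal B_1}$, which is precisely $C\|f\|_{{\sf Co}_{\mathcal W_{w_1,w_2}}(\mathfrak B)}\|u\|_{{\sf co}_{w_1}(\mathcal B_1)}$\,.

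To derive the intertwining formula, I would first expand both sides and match them using the Weyl-system/pseudo-differential dictionary already recorded. For the right-hand side, unfold
\begin{equation*}
\big[\mathfrak E_{\mathcal W_{w_1,w_2}}(f)\big](\Y,\Z)=\big\<\varepsilon_\Y\#f\#\varepsilon_\Z^\#,\mathcal W_{w_1,w_2}\big\>_{(\Xi)}
\end{equation*}
and invoke \eqref{napasttaka} together with Proposition \ref{cateva}\,(i) and Corollary \ref{conzecinta} to rewrite this as
\begin{equation*}
\big\<{\sf Op}\big(\varepsilon_\Y\#f\#\varepsilon_\Z^\#\big)w_1,w_2\big\>_{(\g)}=\big\<{\sf E}(\Y){\sf Op}(f){\sf E}(\Z)^*w_1,w_2\big\>_{(\g)}=\big\<{\sf Op}(f){\sf E}(\Z)^*w_1,{\sf E}(\Y)^*w_2\big\>_{(\g)}.
\end{equation*}
For the left-hand side, apply the Fourier-Wigner reconstruction inherited from the orthogonality relations \eqref{orthog}, namely $u=\int_\Xi\mathcal E_{w_1}(u)(\Z)\,{\sf E}(\Z)^*w_1\,d\Z$ (valid weakly for $u\in\S(\g)$ since $w_1$ is a normalized Schwartz window), and insert it into $\mathcal E_{w_2}({\sf Op}(f)u)(\Y)=\<{\sf Op}(f)u,{\sf E}(\Y)^*w_2\>_{(\g)}$. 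Commuting ${\sf Op}(f)$ past the integral (legitimate because ${\sf Op}(f)\in\mathbb L[\S'(\g),\S(\g)]$ by Remark \ref{atreia}) immediately produces the integral-operator expression above.

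The three objects $\mathcal E_{w_1}(u),\,\mathcal E_{w_2}({\sf Op}(f)u),\,\mathfrak E_{\mathcal W_{w_1,w_2}}(f)$ that appear are all Schwartz (on $\Xi$, respectively $\Xi\times\Xi$), because of Lemma \ref{startortog}, the Schwartz-continuity of ${\sf Op}(f)$, the fact that $\mathcal W_{w_1,w_2}\in\S(\Xi)$ for Schwartz windows, and Proposition \ref{lafin} together with the continuity of $\mathfrak E_h$ on the appropriate Schwartz spaces. Hence the hypothesis on ${\sf Int}(\Psi)$ applies directly with $\Psi=\mathfrak E_{\mathcal W_{w_1,w_2}}(f)$, and no extra density or extension argument is needed to conclude at the stated level of norms on $\S(\g)$.

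The main obstacle is the rigorous justification of the reconstruction identity and the interchange of ${\sf Op}(f)$ with the vector-valued integral; both are clean at the Schwartz level by Lemma \ref{startortog} and Remark \ref{atreia}, but care must be taken because the Weyl system ${\sf E}$ is not a representation of $\Xi$, so one cannot rely on the classical Feichtinger--Gr\"ochenig machinery. The key computational lemma is therefore the \emph{pointwise} identification of $\mathfrak E_{\mathcal W_{w_1,w_2}}(f)(\Y,\Z)$ as the matrix coefficient $\<{\sf Op}(f){\sf E}(\Z)^*w_1,{\sf E}(\Y)^*w_2\>_{(\g)}$, which passes through Proposition \ref{cateva}\,(i), Corollary \ref{conzecinta}, and \eqref{napasttaka}; once this is in place, the remainder is Fubini and the hypothesis.
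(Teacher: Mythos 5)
Your proposal is correct and follows essentially the same route as the paper: the core is exactly the intertwining identity \eqref{formmula}, established by identifying the kernel $\mathfrak E_{\mathcal W_{w_1,w_2}}(f)(\Y,\Z)$ with the matrix coefficient $\big\<{\sf E}(\Y){\sf Op}(f){\sf E}(\Z)^*w_1,w_2\big\>_{(\g)}$ via \eqref{napasttaka} and the relation ${\sf Op}\big(\varepsilon_\Y\#f\#\varepsilon_\Z^\#\big)={\sf E}(\Y){\sf Op}(f){\sf E}(\Z)^*$, followed by the same two-line norm chain. The only cosmetic difference is that the paper states \eqref{formmula} as the operator identity $\mathcal E_{w_2}{\sf Op}(f)\mathcal E_{w_1}^\dag={\sf Int}\big[\mathfrak E_{\mathcal W_{w_1,w_2}}(f)\big]$ and then uses $\mathcal E_{w_1}^\dag\mathcal E_{w_1}={\rm id}$, whereas you substitute the reconstruction formula for $u$ directly; these are equivalent.
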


\begin{proof}
We first show that for $w_1,w_2\in\S(\g)$ and $f\in\S(\Xi)$\,, in terms of the Wigner transform \eqref{napasttaka}, one has
\begin{equation}\label{formmula}
\mathcal E_{w_2}{\sf Op}(f)\mathcal E_{w_1}^\dag={\sf Int}\big[\mathfrak E_{\mathcal W_{w_1,w_2}}(f)\big]\,.
\end{equation}
For this we compute
\begin{align}
\big(\mathcal E_{w_2}{\sf Op}(f)\mathcal E_{w_1}^\dag g\big)(\X)&=\Big\<{\sf E}(\X){\sf Op}(f)\mathcal E^\dag_{w_1}g,w_2\Big\>\\
&\overset{\eqref{adhuntul}}{=}\Big\<{\sf E}(\X){\sf Op}(f)\int_\Xi g(\Y){\sf E}(\Y)^*d\Y\,w_1,w_2\Big\>\\
&=\int_\Xi g(\Y)\big\<{\sf Op}\big(\varepsilon_\X\#f\#\varepsilon_\Y^\#\big)w_1,w_2\big\>\,d\Y\\
&\overset{\eqref{napasttaka}}{=}\int_\Xi \big\<\varepsilon_\X\#f\#\varepsilon_\Y^\#,\mathcal W_{w_1,w_2}\big\>_{\!(\Xi)}\,g(\Y)\,d\Y\\
&=\big({\sf Int}\big[\mathfrak E_{\mathcal W_{w_1,w_2}}\!(f)\big]g\big)(\X)\,.
\end{align}

Then the norm estimate is easy: for every $u\in\S(\g)$
\begin{align}
\p\!{\sf Op}(f)u\!\p_{{\sf co}_{w_2}\!(\mathcal B_2)}&=\,\p\!\mathcal E_{w_2}{\sf Op}(f)u\!\p_{\mathcal B_2}=\big\Vert\,{\sf Int}\big[\mathfrak E_{\mathcal W_{w_1,w_2}}\!(f)\big]\mathcal E_{w_1}(u)\,\big\Vert_{\mathcal B_1}\\
&\le \big\Vert\,{\sf Int}\big[\mathfrak E_{\mathcal W_{w_1,w_2}}\!(f)\big]\big\Vert_{\mathbb B(\mathcal B_1,\mathcal B_2)}\p\!\mathcal E_{w_1}(u)\!\p_{\mathcal B_1}\\
&\le C\,\big\Vert\,\mathfrak E_{\mathcal W_{w_1,w_2}}\!(f)\,\big\Vert_{\mathfrak B}\p\!\mathcal E_{w_1}(u)\!\p_{\mathcal B_1}\\
&= C\,\big\Vert\,f\,\big\Vert_{{\sf Co}_{\mathcal W_{w_1,w_2}}\!(\mathfrak B)}\!\p\!u\!\p_{{\sf co}_{w_1}\!(\mathcal B_1)}.
\end{align}
\end{proof}

\begin{Remark}\label{critica}
Theorem \ref{margisnitele} provides a boundedness result for pseudo-differential operators involving coorbit norms both at the level of vectors and at the level of symbols. However, in the statement and the proof of this Theorem, the initial or the induced norms are only defined on (various) Schwartz spaces and the action of the operators are also confined to such spaces. Of course, automatically, there are bounded extensions to the corresponding completions. But, for a really nice result, some technical issues still have to be solved. For example, if $\mathcal B_1$ denotes the completion of $\big(\S(\Xi),\p\!\cdot\!\p_{\mathcal B_1}\!\!\big)$, is it true that the (very relevant) completion of $\big(\S(\g),\p\!\cdot\!\p_{{\sf co}_{w_1}\!(\mathcal B_1)}\!\!\big)$ may be identified with the coorbit space ${\sf co}_{w_1}\!(\mathcal B_1)$\,? There is a similar question starting with the completion $\mathfrak B$ of $\big(\S(\Xi\times\Xi),\p\!\cdot\!\p_{\mathfrak B}\!\!\big)$\,. In addition, one would like to treat boundedness for coorbit spaces associated to spaces of temperate distributions in which the Schwartz space is not dense. Besides this, many other topics deserves attention, as duality, interpolation, equivalent norms, dependence of windows, decompositions, Schatten-von Neumann behavior, etc. They will be treated systematically in a subsequent publication.
\end{Remark}

\begin{Remark}\label{conkrete}
Another reason to invest effort in a future article is {\it concreteness}. Besides abstract results, valid for general choices, many interesting facts will only occur in particular situations. Even if $\g=\R^n$ (Abelian), most of the previous work has been dedicated to {\it weighted modulation spaces}, having mixed $L^{p,q}$-spaces as a starting point. In addition, an important and difficult issue is to compare the modulation (or the coorbit) spaces with other function spaces, defined by different techniques. Frames should also be studied. Hopefully, more specific Lie algebra features will appear at a certain moment.
\end{Remark}

{\textbf Acknowledgement.}
The author has been supported by the Fondecyt Project 1160359. 

\smallskip
He is grateful for having the opportunity to participate in the Conference MicroLocal and Time Frequency Analysis 2018 in honor of Luigi Rodino on the occasion of his 70th Birthday.


\end{document}